\newlength{\wideitemsep}
\let\olditem\item
\renewcommand{\item}{\setlength{\itemsep}{\wideitemsep}\olditem}
\renewcommand{\geq}{\geqslant}
\newcommand{\lra}{\longrightarrow}
\newcommand{\hooklongrightarrow}{\lhook\joinrel\longrightarrow}
\newtheorem{theorem}{Theorem}[section]
\newtheorem{lemma}[theorem]{Lemma}
\newtheorem{proposition}[theorem]{Proposition}
\newtheorem{corollary}[theorem]{Corollary}
\newtheorem*{conjecture*}{Conjecture}
\theoremstyle{definition}
\newtheorem{definition}[theorem]{Definition}
\newtheorem{question}[theorem]{Question}
\newtheorem{remark}[theorem]{Remark}
\newcommand{\cA}{\mathcal{A}}
\newcommand{\cB}{\mathcal{B}}
\newcommand{\cC}{\mathcal{C}}
\newcommand{\cE}{\mathcal{E}}
\newcommand{\cF}{\mathcal{F}}
\newcommand{\cI}{\mathcal{I}}
\newcommand{\cM}{\mathcal{M}}
\newcommand{\cN}{\mathcal{N}}
\newcommand{\cO}{\mathcal{O}}
\newcommand{\cS}{\mathcal{S}}
\newcommand{\cU}{\mathcal{U}}
\newcommand{\cZ}{\mathcal{Z}}
\renewcommand{\AA}{\ensuremath{\mathbb{A}}}
\newcommand{\CC}{\ensuremath{\mathbb{C}}}
\newcommand{\PP}{\ensuremath{\mathbb{P}}}
\newcommand{\ZZ}{\ensuremath{\mathbb{Z}}} 
\DeclareMathOperator{\Bl}{Bl}
\DeclareMathOperator{\Ext}{Ext}
\newcommand{\sExt}{\mathcal{E}xt}
\DeclareMathOperator{\Hilb}{Hilb}
\DeclareMathOperator{\Hom}{Hom}
\DeclareMathOperator{\Jac}{Jac}
\DeclareMathOperator{\id}{id}
\DeclareMathOperator{\Kum}{Kum}
\DeclareMathOperator{\Pic}{Pic}
\DeclareMathOperator{\Supp}{Supp}
\DeclareMathOperator{\Sym}{Sym}
\begin{document}

\begin{abstract}
    Let $M$ be a projective fine moduli space of stable sheaves on a smooth projective variety $X$ with a universal family $\cE$. We prove that in four examples, $\cE$ can be realized as a complete flat family of stable sheaves on $M$ parametrized by $X$, which identifies $X$ with a smooth connected component of some moduli space of stable sheaves on $M$.
\end{abstract}
\title{Examples of smooth components of moduli spaces of stable sheaves}

\author{Fabian Reede}
\address{Institut f\"ur Algebraische Geometrie, Leibniz Universit\"at Hannover, Welfengarten 1, 30167 Hannover, Germany}
\email{reede@math.uni-hannover.de}
\author{Ziyu Zhang}
\address{Institut f\"ur Algebraische Geometrie, Leibniz Universit\"at Hannover, Welfengarten 1, 30167 Hannover, Germany}
\curraddr{Institute of Mathematical Sciences, ShanghaiTech University, 393 Middle Huaxia Road, 201210 Shanghai, P.R.China}
\email{zhangziyu@shanghaitech.edu.cn}
\date{}
         
\keywords{stable sheaves, moduli spaces, universal families, $\PP^n$-functors}

\subjclass[2010]{Primary: 14F05; Secondary: 14D20, 14J60, 53C26}

\maketitle

\section*{Introduction}

\subsection*{Background}

The starting point of the article is a classical result on the moduli space of stable vector bundles on curves. Let $C$ be a smooth complex projective curve of genus $g\geq 2$. We denote the moduli space of stable vector bundles on $C$ of rank $n$ with a fixed determinant line bundle $L_d$ of degree $d$ by $M$.

If $n$ and $d$ are coprime, then it is known by \cite{mum, tju} that $M$ is a fine moduli space, namely, there exist a universal vector bundle $\mathcal{E}$ on $C\times M$ with the property that the fiber $\mathcal{E}|_{C \times \{m\}}$ over a closed point $m=[E]\in M$ is isomorphic to the bundle $E$ itself. But one can also take a closed point $c\in C$ and consider the fiber
$$ \cE_c := \mathcal{E}|_{\{c\} \times M}, $$
which is a vector bundle on $M$. In \cite{nara} the authors proved that $\mathcal{E}_c$ is a simple bundle for every closed point $c\in C$ and that the infinitesimal deformation map
\begin{equation*}
    T_cC \longrightarrow \Ext^1_M(\cE_c, \cE_c)
\end{equation*}
is bijective. In fact, for all closed points $c \in C$, the bundles $\cE_c$ are stable and pairwise non-isomorphic by \cite{newstead, lange}.

Thus if we define $\mathcal{M}$ to be the moduli space of stable vector bundles on $M$ with the same Hilbert polynomial as $\mathcal{E}_c$, then the classifying morphism
\begin{equation*}
    f: C\longrightarrow \mathcal{M}, \quad c\longmapsto [\mathcal{E}_c]
\end{equation*}
identifies $C$ with a smooth connected component of $\mathcal{M}$, as explained in \cite{lange}. 

Other examples in a similar spirit appear in the pioneering work of Mukai \cite{Muk81, Muk99} on abelian varieties and K3 surfaces. In the case of K3 surfaces, Mukai considered a general polarized K3 surface $S$ of a certain degree, along with a $2$-dimensional fine moduli space $M$ of stable vector bundles of rank at least $2$ on $S$, admitting a universal family $\cE$ on $S \times M$. It turns out that $M$ is also a K3 surface, and $\cE$ can also be realized as a family of stable bundles on $M$ parametrized by $S$.

As in the previous example, we can define $\cM$ to be the moduli space of stable sheaves on $M$ with the same Hilbert polynomial as $\cE|_{\{s\} \times M}$ for any closed point $s \in S$. Mukai proved that the classifying morphism
$$ f: S \longrightarrow \cM, \quad s \longmapsto [\cE|_{\{s\} \times M}] $$
is in fact an isomorphism. In other words, $S$ can be identified with the entire moduli space of stable sheaves on $M$ with some fixed Chern classes.

\subsection*{Main result}

Motivated by the above examples, one can formulate the following question under a more general setting:

\begin{question}
	\label{ques}
	Let $X$ be a smooth projective variety and $M$ a projective fine moduli space of stable sheaves on $X$ with universal family $\mathcal{E}$ on $X\times M$. Then
	\begin{itemize}
		\item Is $\mathcal{E}$ also a flat family of stable sheaves on $M$ parametrized by $X$?
		\item If so, does the classifying map embed $X$ as a smooth connected component of some moduli space of stable sheaves on $M$?
	\end{itemize}
\end{question}

A positive answer to the above question, especially when $X$ is of low dimension and $M$ is of higher dimension, would be interesting from two perspectives. First of all, examples of stable sheaves on higher dimensional varieties (in particular on higher dimensional irreducible holomorphic symplectic manifolds) are in general difficult to construct. One important class of examples are the \emph{tautological bundles} on Hilbert schemes, which were studied in \cite{Sch10,Wan14,Wan16,Sta16}. Question \ref{ques} provides another natural approach for finding new examples. Secondly, moduli spaces of stable sheaves on higher dimensional varieties are in general badly behaved. A positive answer to Question \ref{ques} would allow us to identify some nicely behaved components of such moduli spaces, and at the same time give an explicit description of a complete family of stable sheaves over these components. 

In this article, we consider Question \ref{ques} in some of the first cases:

\begin{theorem}[Theorems \ref{thm:comp}, \ref{thm:comp2}, \ref{thm:comp3}, \ref{thm:comp4}]
	\label{thm:intro}
	Question \ref{ques} has a positive answer in the following cases:
	\begin{itemize}
    \item $X$ is a smooth projective variety of dimension $d \geqslant 2$ and $M=\Hilb^2(X)$ is the Hilbert scheme of $2$ points on $X$;
    \item $X$ is K3 surface and $M=\Hilb^n(X)$ is the Hilbert scheme of $n$ points on $X$;
    \item $X$ is an abelian surface and $M = \Kum_n(X)$ is the generalized Kummer variety of dimension $2n$ associated to $X$ for any $n \geqslant 2$;
    \item $X$ is a K3 surface of Picard rank $1$ and $M$ is some fine moduli space of stable torsion sheaves of pure dimension $1$ on $X$.
\end{itemize}
\end{theorem}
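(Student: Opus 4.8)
The four cases share a common architecture, and the plan is to reduce each of them to the same package of statements about the integral functor $\Phi_\cE\colon D^b(X)\to D^b(M)$ with kernel $\cE$. For a closed point $x\in X$ one has $\cE_x=\cE|_{\{x\}\times M}=\Phi_\cE(\cO_x)$, so every sheaf in the prospective family on $M$ is the image of a skyscraper under $\Phi_\cE$. The strategy is therefore fourfold: \one\ show $\cE$ is flat over $X$ and that each $\cE_x$ is an honest sheaf; \two\ prove $\cE_x$ is stable for a suitable polarization on $M$; \three\ compute $\Hom_M(\cE_x,\cE_x)$ and $\Ext^1_M(\cE_x,\cE_x)$; and \four\ assemble these into the statement that $f\colon x\mapsto[\cE_x]$ identifies $X$ with a smooth connected component of $\cM$.

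Starting with \one, the universal family in the three Hilbert-type cases is (a twist of) the universal ideal sheaf $\cI_\Xi$ sitting in $0\to\cI_\Xi\to\cO_{X\times M}\to\cO_\Xi\to0$, and I would deduce flatness over $X$ from the explicit structure of the universal subscheme $\Xi$ (in the $\Kum_n$ case after restricting from $\Hilb^{n+1}(X)$); in the last case $\cE$ is the given universal sheaf, already flat over $M$, and one checks flatness over $X$ directly. Once $\cE$ is flat over $X$, the complex $\Phi_\cE(\cO_x)$ is computed by ordinary restriction and is thus concentrated in degree $0$, so each $\cE_x$ is a genuine sheaf and $\{\cE_x\}_{x\in X}$ is a flat family parametrized by $X$.

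The computational heart is \three. The key input is that $\Phi_\cE$ (or a closely related functor) is a $\PP^{m}$-functor, with $m=n-1$ in the $\Hilb^n$ and $\Kum_n$ cases, while in the one-dimensional-sheaf case $\Phi_\cE$ is even fully faithful (indeed a derived equivalence when $M$ is itself a K3 surface). In the K3 and abelian settings this yields, as graded vector spaces, an isomorphism $\Ext^\bullet_M(\cE_x,\cE_x)\cong\Ext^\bullet_X(\cO_x,\cO_x)\otimes H^\bullet(\PP^m)$; for general $X$ in the $\Hilb^2$ case I would obtain the same conclusion by a direct computation with the Koszul/ideal-sheaf resolution. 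Taking graded pieces gives $\Hom_M(\cE_x,\cE_x)=\Hom_X(\cO_x,\cO_x)=\CC$, so $\cE_x$ is simple, and $\Ext^1_M(\cE_x,\cE_x)\cong\Ext^1_X(\cO_x,\cO_x)=T_xX$, since the higher factors of $H^\bullet(\PP^m)$ only pair with the vanishing negative $\Ext$-groups on $X$. I would then verify that this isomorphism is precisely the Kodaira--Spencer map of $\{\cE_x\}$, and that $\Hom_M(\cE_x,\cE_{x'})=\Hom_X(\cO_x,\cO_{x'})=0$ for $x\neq x'$, so the $\cE_x$ are pairwise non-isomorphic and $f$ is injective.

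The main obstacle is \two: simplicity is free from the functorial formalism, but stability is not, and must be established by hand for a chosen polarization on $M$. Here I would identify $\cE_x$ with (a modification of) a tautological sheaf and either invoke or adapt the known stability results for tautological bundles \cite{Sch10,Wan14,Wan16,Sta16}, or test stability by restriction to rational or ample curves in $M$ and bound the slopes of potential destabilizing subsheaves; the choice of polarization and the torsion-versus-locally-free nature of $\cE_x$ make this the most case-sensitive and technically delicate step. Granting \one--\three, the conclusion \four\ is formal: $f\colon X\to\cM$ is a morphism since $\{\cE_x\}$ is a flat family of stable sheaves, it is injective, and its differential is the Kodaira--Spencer isomorphism $T_xX\xrightarrow{\ \sim\ }\Ext^1_M(\cE_x,\cE_x)=T_{f(x)}\cM$, so $f$ is an immersion onto a smooth subscheme of dimension $\dim X$ that fills its own tangent space and is therefore open; as $X$ is projective the image is also closed, and an open and closed connected subset is a connected component. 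This exhibits $X$ as a smooth connected component of $\cM$.
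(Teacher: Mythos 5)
Your overall architecture (flatness, stability of the fibres, $\Ext$ computation via $\PP^n$-functors, formal conclusion via injectivity plus tangent space dimension) is exactly the paper's architecture for the $\Hilb^n(S)$, $\Kum_n(A)$ and torsion-sheaf cases, and your steps \one, \three\ and \four\ match what the paper does there. But two of your steps contain genuine problems. First, step \two\ rests on a misidentification: the fibres $\cE_x$ are not tautological bundles (those are the rank-$n$ sheaves $p_{2*}(p_1^{*}F\otimes\cO_{\cZ})$), but restrictions of the universal \emph{ideal} sheaf, i.e.\ the ideal sheaf of the locus $\{\xi\in M : x\in\Supp(\xi)\}$, respectively a rank-one sheaf on the integral divisor $T_s\subset M$ in the fourth case. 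These are rank-one torsion-free (resp.\ pure rank-one) sheaves, so stability is immediate; none of the tautological-bundle stability results or slope tests you invoke apply to these objects, and the only nontrivial point is in the fourth case, where one must show $\cE_s$ is torsion-free on its integral Gorenstein support --- the paper does this via a maximal Cohen--Macaulay argument (Lemmas \ref{lem:Tproperty} and \ref{lem:Eproperty}). So the step you flag as the ``main obstacle'' is in fact the easy one, and the route you propose for it would not work for the actual objects.

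Second, your treatment of the $\Hilb^2(X)$ case for general $X$ has a real gap. No $\PP^n$-functor statement is available there, and the formula $\Ext^{\bullet}_M(\cE_x,\cE_x)\cong\Ext^{\bullet}_X(\cO_x,\cO_x)\otimes H^{\bullet}(\PP^m)$ is false in general: if $X$ is, say, an abelian surface, then $H^1(\cO_{\Hilb^2(X)})\neq 0$ splits off as a direct summand of $\Ext^1_M(\cI_{S_p},\cI_{S_p})$, which is then strictly larger than $\dim X$, so the full moduli space is \emph{not} smooth of dimension $d$ along the image of $f$ and your step \four\ breaks down. The paper circumvents this by taking $\cM$ to be the moduli space of stable sheaves with \emph{trivial determinant}, identified (via \cite{KPS18}) with a Hilbert scheme of subschemes of $\Hilb^2(X)$, whose tangent space is $\Hom(\cI_{S_p},\cO_{S_p})=H^0(S_p,\cN_{S_p/M})$; the substantive work of Section~1 is then the computation $\dim H^0(S_p,\cN_{S_p/M})=d$ using $S_p\cong\Bl_p(X)$ and the conormal sequence of $F_p\subset\pi^{-1}(S_p)\subset\cZ$ (Lemmas \ref{lem:snc}--\ref{lem:sections}). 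This idea --- fixing determinants and computing the normal sheaf of $\Bl_p(X)$ inside $\Hilb^2(X)$ --- is missing from your proposal and is not supplied by the Koszul computation you sketch. (Two smaller inaccuracies: in the fourth case $\Phi_{\cU}$ is a $\PP^{g-1}$-functor, not fully faithful, though this does not affect the degrees $0$ and $1$ you need; and the flatness of $\cI_{\cZ}$ over $A$ in the Kummer case requires a nontrivial fibre-dimension count, valid only for $n\geqslant 2$, rather than following from the ``explicit structure'' of the universal subscheme.)
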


Our proof in the first of the above cases will be completely elementary. In all other cases, the moduli space $M$ is in fact an irreducible holomorphic symplectic manifold, and our proof will be divided into two steps: we first establish the flatness of $\cE$ over $X$ and the stability of the fibers $\cE_p$ over any closed point $p \in X$, then apply some very convenient results about $\PP^n$-functors (see \cite{add16}) to conclude that $X$ is in fact a component of some moduli space of stable sheaves on $M$.

It would be much more interesting to study Question \ref{ques} in more general settings, especially when $X$ and $M$ have trivial canonical classes and $\cE$ is torsion free (or even locally free) of higher rank. However, it could be then much more difficult to prove the stability of $\cE_p$ for any closed point $p \in X$. Moreover, the corresponding results about $\PP^n$-functors are not yet known to us (see \cite[Conjecture, p.2]{add16} and \cite[Conjecture 2.1]{add16-2}). 

This article consists of four sections, which are devoted to the four cases in Theorem \ref{thm:intro} respectively. The notion of $\PP^n$-functors will be briefly recalled in the beginning of \S \ref{sec2}, followed immediately by a list of $\PP^n$-functors relevant to our discussion. All schemes are defined over the field of complex numbers $\CC$.

\subsection*{Acknowledgement} We are grateful to Nicolas Addington for expert advice on the application of $\PP^n$-functors, and to Andreas Krug for communicating to us Lemma \ref{lem:KumFlat}, as well as to Benjamin Schmidt for helpful conversations. We thank the referee for carefully reading the manuscript and many helpful comments for improvements.

\section{Hilbert squares of smooth projective varieties}

Let $X$ be a smooth projective variety of dimension $d$, and $M = \Hilb^2(X)$. We denote by $\cZ \subseteq X \times M$ the universal closed subscheme and $\cI_\cZ$ the universal ideal sheaf on $X \times M$. Then we have a commutative diagram
\begin{equation}
	\label{eqn:diag1}
	\begin{tikzcd}
		\cZ \ar[rd, hook] \ar[rrd, bend left=20, "\pi"] \ar[rdd, bend right=20,  "\tau"'] & & \\
		& X \times M \ar[r] \ar[d] & M \\
		& X & 
	\end{tikzcd}
\end{equation}
where $\pi$ is a flat morphism. 

By \cite[Remark 7.2.2.]{FGA05}, we have $\cZ = \Bl_\Delta(X \times X)$, the blow-up of $X \times X$ along the diagonal $\Delta$. The projection $\tau$ can be interpreted as a composition 
\begin{equation}
	\label{eqn:diag2}
	\tau: \cZ = \Bl_\Delta(X \times X) \stackrel{b}{\longrightarrow} X \times X \stackrel{q_1}{\longrightarrow} X
\end{equation}
of the blow-up $b$ and the projection $q_1$ to the first factor. Moreover, the group $\Sigma_2=\ZZ/2\ZZ$ acts on $\cZ$ by switching the two factors, with a fixed-locus given by the exceptional divisor. By \cite[Example 7.3.1(3)]{FGA05}, $\pi$ is the quotient of $\cZ$ by $\Sigma_2$.

For any closed point $p \in X$, we write
\begin{equation*}
	F_p := \tau^{-1}(p) \subseteq \cZ \quad \text{and} \quad S_p := \pi(F_p) \subseteq M.
\end{equation*}
Then we have the following results regarding the fibers of $\tau$:

\begin{lemma}
	\label{lem:fibers}
	We have $ S_p \cong F_p \cong \Bl_p(X) $, and the morphism $\tau$ is flat.
\end{lemma}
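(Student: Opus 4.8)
The plan is to identify the fibre $F_p$ explicitly, read off flatness of $\tau$ from a dimension count, and finally show that $\pi$ restricts to an isomorphism $F_p \xrightarrow{\sim} S_p$. First I would compute $F_p = \tau^{-1}(p)$ using the factorisation $\tau = q_1 \circ b$ of \eqref{eqn:diag2}. As $q_1$ is the projection, its fibre is $q_1^{-1}(p) = \{p\}\times X$, so $F_p = b^{-1}(\{p\}\times X)$. The crucial observation is that $\{p\}\times X$ meets the diagonal $\Delta$ transversally in the single point $(p,p)$, since $T_{(p,p)}(\{p\}\times X) = 0 \oplus T_pX$ and $T_{(p,p)}\Delta = \{(v,v): v\in T_pX\}$ are complementary in $T_pX \oplus T_pX$. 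For a transversal intersection the total transform under $b$ agrees with the strict transform, which is the blow-up of $\{p\}\times X$ along $(\{p\}\times X)\cap\Delta = \{(p,p)\}$; identifying $\{p\}\times X \cong X$ via $q_2$, this gives $F_p \cong \Bl_p(X)$. This can be confirmed in the standard blow-up charts, where $b^*(x_1,\dots,x_d) = (x_1,\dots,x_d)$ cuts out exactly a chart of $\Bl_0\AA^d$ in the fibre directions, with no embedded exceptional component. In particular each $F_p$ is smooth of pure dimension $d$.

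Flatness of $\tau$ then follows from the local criterion for flatness (``miracle flatness''): the source $\cZ = \Bl_\Delta(X\times X)$ is smooth, hence Cohen--Macaulay, of dimension $2d$; the target $X$ is regular of dimension $d$; and by the previous step every fibre has pure dimension $d = 2d - d$. Hence $\tau$ is flat, and in fact smooth since the fibres are smooth.

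For $S_p \cong F_p$ I would prove that $\pi|_{F_p}$ is a closed immersion. As $F_p$ is proper it is enough to check that $\pi|_{F_p}$ is injective and unramified, since a proper, injective, unramified morphism of varieties is a closed immersion onto its image, which is then $S_p$. Let $\sigma$ denote the generator of $\Sigma_2$. Off the exceptional divisor $E$ the fibres of $\pi$ are the orbits $\{z,\sigma(z)\}$, so if $z,z'\in F_p$ satisfy $\pi(z)=\pi(z')$ with $z\neq z'$ then $z'=\sigma(z)$ and $z\in F_p\cap\sigma(F_p)$; but $\sigma(F_p) = b^{-1}(X\times\{p\})$, so $F_p\cap\sigma(F_p) = b^{-1}(\{(p,p)\}) = E_p \cong \PP^{d-1}$, which lies in the fixed locus $E$, forcing $\sigma(z)=z$, a contradiction. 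Thus $\pi|_{F_p}$ is injective, and away from $E_p$ it is unramified because $\pi$ is there étale. The one genuinely delicate point, which I expect to be the main obstacle, is unramifiedness along $E_p$, precisely where $\pi$ ramifies as the branched double cover $\cZ\to\cZ/\Sigma_2 = M$: one must verify that $F_p$ is transverse to this ramification so that $d(\pi|_{F_p})$ stays injective. I would settle this in blow-up coordinates $(x_i,u_j,v_i)$, in which $\sigma(x_i,u_j,v_i) = (x_i+u_jv_i,-u_j,v_i)$ and $F_p = \{x=0\}$: the $\Sigma_2$-invariant function $\xi_j = x_j + \tfrac12 u_j$ descends to a local coordinate on $M$ and restricts on $F_p$ to $\tfrac12 u_j$, whose differential $\tfrac12\,du_j$ is nonzero along $E_p$; together with the invariant coordinates $v_i$ for $i\neq j$, this shows $d(\pi|_{F_p})$ is injective along $E_p$. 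Hence $\pi|_{F_p}$ is a closed immersion and $S_p \cong F_p \cong \Bl_p(X)$.
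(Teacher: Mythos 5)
Your proof is correct, but for the key claim $S_p\cong F_p$ you take a genuinely different and considerably longer route than the paper. The paper's proof of that point is a one-liner: since $\tau$ is the composite $\cZ\hookrightarrow X\times M\to X$, the fibre $F_p=\tau^{-1}(p)=\cZ\cap(\{p\}\times M)$ is by construction a closed subscheme of $\{p\}\times M$, and $\pi|_{F_p}$ factors as $F_p\hookrightarrow\{p\}\times M\xrightarrow{\,\cong\,}M$, hence is automatically a closed immersion onto its image $S_p$ --- there is nothing to check about injectivity or ramification. You instead work entirely in the quotient model $\pi:\cZ=\Bl_\Delta(X\times X)\to\cZ/\Sigma_2=M$ and prove that $\pi|_{F_p}$ is proper, injective and unramified, invoking the standard fact that such a morphism is a closed immersion; this forces you to analyse the locus $E_p$ where $\pi$ is a branched double cover. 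Your analysis there is sound: the identification $F_p\cap\sigma(F_p)=b^{-1}(p,p)=E_p$ inside the fixed locus gives injectivity, and the invariant function $\tfrac12(x_j+y_j)$, which restricts on $F_p=\{x=0\}$ to a coordinate transverse to $E_p$, gives unramifiedness (your formula for $\sigma$ has a sign quirk, but the computation is consistent and matches the coordinates used in the paper's proof of Lemma \ref{lem:snc}). A side benefit of your detour is that it already exhibits the transversality of $F_p$ and $\sigma(F_p)$, which the paper only establishes later. On the other two claims you essentially coincide with the paper: flatness is miracle flatness in both arguments, and you supply the transversality-of-total-transform justification for $F_p\cong\Bl_p(X)$ that the paper dismisses as well known.
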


\begin{proof}
	The morphism $\pi|_{F_p}$ can be factored into a composition
	$$ \pi|_{F_p}: F_p \hooklongrightarrow \{p\} \times M \stackrel{\cong}{\longrightarrow} M, $$
	hence $\pi$ induces an isomorphism from $F_p$ to its image $S_p$. The canonical isomorphism $F_p \cong \Bl_p(X)$ is well known. Finally, since $\cZ$ and $X$ are both smooth and the fibers $F_p$ of $\tau$ are irreducible of dimension $d$ for all closed points $p\in X$, we deduce from \cite[Theorem 23.1, Corollary]{Mat86} that $\tau$ is flat.
\end{proof}

By the description of $F_p$ as a blow-up in Lemma \ref{lem:fibers}, we denote the exceptional divisor by $E_p \stackrel{\alpha}{\hooklongrightarrow} F_p$, then $E_p \cong \PP^{d-1}$. This allows us to state the following result:

\begin{lemma}
	\label{lem:snc}
	$\pi^{-1}(S_p)$ has simple normal crossing singularities with two irreducible components
	$$ \pi^{-1}(S_p) = F_p \cup \sigma(F_p) \quad \text{such that} \quad F_p \cap \sigma(F_p) = E_p $$
	where $\sigma$ is the non-trivial element of $\Sigma_2$.
\end{lemma}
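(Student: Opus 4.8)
The plan is to first pin down the two components set-theoretically, then verify the crossing behaviour by an explicit computation in blow-up coordinates near $E_p$.

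Since $\pi$ realises $M$ as the quotient $\cZ/\Sigma_2$, the preimage $\pi^{-1}(S_p)=\pi^{-1}(\pi(F_p))$ is just the $\Sigma_2$-orbit of $F_p$, i.e. $F_p\cup\sigma(F_p)$. By Lemma~\ref{lem:fibers} each of $F_p$ and $\sigma(F_p)$ is isomorphic to $\Bl_p(X)$, hence smooth and irreducible, and they are visibly distinct: on the open locus $\cZ\setminus E\cong(X\times X)\setminus\Delta$ they are the loci with first, respectively second, coordinate equal to $p$. Thus these are precisely the two irreducible components of $\pi^{-1}(S_p)$.

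Next I would identify the intersection. The fixed locus of $\sigma$ on $\cZ$ is the exceptional divisor $E$ (as recalled above), so $\sigma$ fixes $E$ pointwise; since $E_p=F_p\cap E\subseteq E$, this gives $E_p=\sigma(E_p)\subseteq\sigma(F_p)$, hence $E_p\subseteq F_p\cap\sigma(F_p)$. For the reverse inclusion, on $\cZ\setminus E$ the two sheets are disjoint (a common point would have both coordinates equal to $p$, forcing it onto $\Delta$), while on $E$ one has $F_p\cap E=\sigma(F_p)\cap E=E_p$. Therefore $F_p\cap\sigma(F_p)=E_p$.

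The heart of the argument---and the step I expect to be the main obstacle---is the simple normal crossing structure along $E_p$, which I would check in a local chart of the blow-up $b$. Writing $u=x$ for the first-factor coordinate (so $\tau=u$) and $v=y-x$, a standard chart has coordinates $(u_1,\dots,u_d,t,s_2,\dots,s_d)$ with $v_1=t$, $v_j=ts_j$, and $E=\{t=0\}$; here $\sigma$ acts by $u\mapsto u+v$, $v\mapsto-v$, so that $F_p=V(u_1,\dots,u_d)$ and $\sigma(F_p)=V(u_1+t,\,u_2+ts_2,\dots,u_d+ts_d)$. After the invertible (triangular) substitution $\xi=u_1$, $\eta=u_1+t$, $\zeta_j=u_j-s_ju_1$ for $j\geq2$ (retaining $s_2,\dots,s_d$), these become $F_p=V(\xi,\zeta_2,\dots,\zeta_d)$ and $\sigma(F_p)=V(\eta,\zeta_2,\dots,\zeta_d)$, so their union is cut out by $(\zeta_2,\dots,\zeta_d,\xi\eta)$. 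This is precisely the node $\{\xi\eta=0\}$ inside the smooth $(d+1)$-dimensional slice $V(\zeta_2,\dots,\zeta_d)$, with singular locus $\{\xi=\eta=0\}=E_p$---the standard simple normal crossing model. The subtlety is exactly that $F_p$ and $\sigma(F_p)$ have codimension $d$ in $\cZ$, so transversality is not automatic; the coordinate change above is what exhibits them as two hyperplanes meeting cleanly inside a common smooth $(d+1)$-fold. Since this computation is uniform over the standard charts covering $E_p\cong\PP^{d-1}$, the claim follows.
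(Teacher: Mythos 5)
Your chart computation of the union $F_p\cup\sigma(F_p)$ is correct and is essentially the same local computation the paper performs (the paper uses the symmetric coordinates $s_i=x_i+y_i$, $d_i=x_i-y_i$, in which the involution is visibly $d_1\mapsto -d_1$, where you use $u=x$, $v=y-x$ and then diagonalize by hand). The one genuine gap is in the step where you identify $\pi^{-1}(S_p)$ with $F_p\cup\sigma(F_p)$: saying that the preimage of the image under the quotient map is the $\Sigma_2$-orbit only gives this equality \emph{set-theoretically}. The lemma is later fed into Corollary \ref{cor:conormal} via Lemma \ref{lem:ideal}, which is applied to the scheme-theoretic union $Y\cup Z$; so what is actually needed is that the scheme-theoretic preimage of $S_p$ coincides with the scheme-theoretic union, i.e.\ that $\pi^{-1}(S_p)$ is reduced. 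This is not automatic for finite quotients when the subvariety meets the fixed locus: for $\ZZ/2\ZZ$ acting on $\AA^1$ by $x\mapsto -x$, the scheme-theoretic preimage of the image of the origin is $V(x^2)$, not $V(x)$. Here $F_p$ does meet the fixed locus $E$ along $E_p$, which is exactly where the crossing happens, so the point cannot be waved away.

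Fortunately the gap closes inside the chart you already set up, and doing so is precisely what the paper's proof spends its space on (it computes the quotient chart with $e_1=d_1^2$, writes down the ideal of $S_p^1$, and pulls it back). In your coordinates $(\xi,\eta,\zeta_2,\dots,\zeta_d,s_2,\dots,s_d)$ the involution simply swaps $\xi\leftrightarrow\eta$ and fixes the $\zeta_j$ and $s_j$, so the quotient chart has coordinates $(\xi+\eta,\ \xi\eta,\ \zeta_j,\ s_j)$; the image of $F_p=V(\xi,\zeta_2,\dots,\zeta_d)$ is cut out downstairs by $(\xi\eta,\zeta_2,\dots,\zeta_d)$ (this ideal is prime with quotient $\CC[\xi+\eta,s_2,\dots,s_d]$, of the right dimension), and its scheme-theoretic preimage upstairs is again $V(\xi\eta,\zeta_2,\dots,\zeta_d)$ --- exactly the SNC union you wrote down. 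With that supplement your argument is complete and agrees with the paper's.
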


\begin{proof}
	This property can be verified analytically locally. Without loss of generality we assume that $X = \AA^n$, and $p = (0, \cdots , 0) \in X$. Then $X \times X = \AA^n \times \AA^n$ with coordinates $(x_1, \cdots, x_n, y_1, \cdots, y_n)$. We perform an affine change of coordinates: for each $1 \leqslant i \leqslant n$, we write $s_i = x_i + y_i$ and $d_i = x_i - y_i$. Then the diagonal $\Delta$ is given by
	$$ \Delta = \{ (s_1, \cdots, s_n, d_1, \cdots, d_n) \mid d_1 = \cdots = d_n = 0 \}. $$
	By \eqref{eqn:diag2} we have $\cZ = \Bl_{\Delta}(X \times X)$, which is given by a mixture of affine and projective coordinates
	$$ \Bl_\Delta(X \times X) = \{ (s_1, \cdots, s_n, d_1, \cdots, d_n, [u_1 : \cdots : u_n]) \mid [d_1 : \cdots : d_n] = [u_1 : \cdots : u_n] \}. $$
	It is covered by $n$ affine pieces, among which the first affine piece $\Bl_\Delta(X \times X)^1$ is given by $u_1 = 1$; in other words
	\begin{align*}
		\Bl_\Delta(X \times X)^1 &= \{ (s_1, \cdots, s_n, d_1, \cdots, d_n, u_2, \cdots u_n) \mid d_i = u_id_1 \text{ for } 2 \leqslant i \leqslant n \} \\
		&= \{ (s_1, \cdots, s_n, d_1, u_2, \cdots, u_n) \}.
	\end{align*}
	Then we have
	\begin{align*}
		q_1^{-1}(p) &= \{ (x_1, \cdots, x_n, y_1, \cdots, y_n) \mid x_1 = \cdots = x_n = 0 \} \\
		&= \{ (s_1, \cdots, s_n, d_1, \cdots, d_n) \mid s_i + d_i = 0 \text{ for } 1 \leqslant i \leqslant n \}.
	\end{align*}
	We write $F_p^1 = F_p \cap \Bl_\Delta(X \times X)^1$, then
	\begin{align*}
		F_p^1 &= \left\{ (s_1, \cdots, s_n, d_1, u_2, \cdots, u_n)\ \middle| \begin{array}{c} s_1 + d_1 = 0 \\ s_i + u_id_1 = 0 \text{ for } 2 \leqslant i \leqslant n \end{array} \right\} \\
		&= \left\{ (s_1, \cdots, s_n, d_1, u_2, \cdots, u_n)\ \middle| \begin{array}{c} s_1 + d_1 = 0 \\ s_i = u_is_1 \text{ for } 2 \leqslant i \leqslant n \end{array} \right\}.
	\end{align*}
	Notice that $\Bl_\Delta(X \times X)^1$ is $\sigma_2$-invariant. The action of the non-trivial element $\sigma \in \Sigma_2$ is given by
	$$ g: (s_1, \cdots, s_n, d_1, u_2, \cdots, u_n) \longmapsto (s_1, \cdots, s_n, -d_1, u_2, \cdots, u_n). $$
	Therefore we have
	$$ g(F_p^1) = \left\{ (s_1, \cdots, s_n, d_1, u_2, \cdots, u_n)\ \middle| \begin{array}{c} s_1 - d_1 = 0 \\ s_i = u_is_1 \text{ for } 2 \leqslant i \leqslant n \end{array} \right\}$$
	and the quotient $\Bl_\Delta(X \times X)^1/\Sigma_2$ is given by coordinates
	$$ \Bl_\Delta(X \times X)^1/\Sigma_2 = \{ (s_1, \cdots, s_n, e_1, u_2, \cdots, u_n) \} $$
	where $e_1 = d_1^2$. We write the image of $F_p^1$ under the quotient map by 
	$$ S_p^1 := S_p \cap \Bl_\Delta(X \times X)^1/\Sigma_2, $$
	then it follows that
	$$ S_p^1 = \left\{ (s_1, \cdots, s_n, e_1, u_2, \cdots, u_n)\ \middle| \begin{array}{c}  s_1^2=e_1 \\ s_i = u_is_1 \text{ for } 2 \leqslant i \leqslant n  \end{array} \right\}. $$
	It is now clear that
	\begin{align*}
		\pi^{-1}(S_p^1) &= \left\{ (s_1, \cdots, s_n, d_1, u_2, \cdots, u_n)\ \middle| \begin{array}{c} s_1 + d_1 = 0 \\ s_i = u_is_1 \text{ for } 2 \leqslant i \leqslant n \end{array} \right\} \\
		&\cup \left\{ (s_1, \cdots, s_n, d_1, u_2, \cdots, u_n)\ \middle| \begin{array}{c} s_1 - d_1 = 0 \\ s_i = u_is_1 \text{ for } 2 \leqslant i \leqslant n \end{array} \right\} \\
		&= F_p^1 \cup \sigma(F_p^1).
	\end{align*}
	Therefore the intersection of the two components is transverse, and given by
	$$ F_p^1 \cap \sigma(F_p^1) = \{ (s_1, \cdots, s_n, d_1, u_2, \cdots, u_n) \mid s_1 = \cdots = s_n = d_1 = 0 \} $$
	which gives precisely the exceptional divisor $E_p$ in the first affine chart, namely, $E_p \cap \Bl_\Delta(X \times X)^1$. The same argument also applies to all other affine charts of $\Bl_\Delta(X \times X)$, which finishes the proof.
\end{proof}

In the following discussion, for any closed embedding $U\hookrightarrow V$, we denote the corresponding ideal sheaf, conormal sheaf and normal sheaf by $\cI_{U/V}$, $\cC_{U/V}$ and $\cN_{U/V}$ respectively. Now we consider two smooth closed subvarieties $Y$ and $Z$ of a smooth variety, which fit in the following commutative diagram of closed embeddings:
\begin{equation}
    \label{eqn:general}
    \begin{tikzcd}
    Y\cap Z \arrow[hookrightarrow]{r}{\alpha}\arrow[hookrightarrow]{d}[swap]{i} & Z\arrow[hookrightarrow]{d}{j} \\
    Y \arrow[hookrightarrow]{r}{\delta} & Y\cup Z
    \end{tikzcd}
\end{equation}
where the intersection and the union are scheme theoretic. The following lemma will be required in our next result:

\begin{lemma}
    \label{lem:ideal}
    In the situation of \eqref{eqn:general}, we have $\cC_{Z/(Y\cup Z)}\cong \alpha_{*}\cC_{(Y\cap Z)/Y}$.
\end{lemma}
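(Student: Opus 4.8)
The plan is to reduce the statement to a purely local, algebraic computation, since both sides are quasi-coherent sheaves on $Z$ and the asserted isomorphism can be checked on an affine cover of the ambient smooth variety $W$. So I would fix an affine open $\Spec R \subseteq W$ and let $I_Y, I_Z \subseteq R$ be the ideals cutting out $Y$ and $Z$. Because the union and intersection in \eqref{eqn:general} are scheme-theoretic, the ideal of $Y \cup Z$ is $I_Y \cap I_Z$ and the ideal of $Y \cap Z$ is $I_Y + I_Z$. The embedding $j \colon Z \hookrightarrow Y\cup Z$ then corresponds to the surjection $R/(I_Y\cap I_Z) \onto R/I_Z$, so that
$$ \cI_{Z/(Y\cup Z)} = I_Z/(I_Y\cap I_Z), $$
while the embedding $i\colon Y\cap Z \hookrightarrow Y$ gives $\cI_{(Y\cap Z)/Y} = (I_Y+I_Z)/I_Y$.

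The next step is to compute the two conormal sheaves as explicit subquotients of $R$, paying close attention to the ambient ring in which each ideal square is formed. For the left-hand side, the square of $\cI_{Z/(Y\cup Z)}$ inside $\cO_{Y\cup Z} = R/(I_Y\cap I_Z)$ is the image of $I_Z^2$, whence
$$ \cC_{Z/(Y\cup Z)} = I_Z/\bigl(I_Z^2 + (I_Y\cap I_Z)\bigr). $$
For the right-hand side, the square of $\cI_{(Y\cap Z)/Y}$ inside $\cO_Y = R/I_Y$ is the image of $(I_Y+I_Z)^2 = I_Y^2 + I_YI_Z + I_Z^2$, which reduces modulo $I_Y$ to the image of $I_Z^2$; hence
$$ \cC_{(Y\cap Z)/Y} = (I_Y+I_Z)/(I_Y + I_Z^2). $$
Pushing forward along $\alpha$ merely reinterprets this $\cO_{Y\cap Z}$-module as an $\cO_Z$-module via $\cO_Z \onto \cO_{Y\cap Z}$, so it suffices to produce a canonical isomorphism between the two displayed $R$-modules.

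To finish, I would invoke the inclusion $I_Z \hookrightarrow I_Y + I_Z$, which by the second isomorphism theorem (taken with $A = I_Z$ and $B = I_Y + I_Z^2$) induces
$$ I_Z/\bigl(I_Z \cap (I_Y + I_Z^2)\bigr) \stackrel{\cong}{\longrightarrow} (I_Y+I_Z)/(I_Y+I_Z^2), $$
surjectivity being clear since $I_Z^2 \subseteq I_Z$. The crux of the argument is then the identity
$$ I_Z \cap (I_Y + I_Z^2) = (I_Y\cap I_Z) + I_Z^2, $$
which is exactly the modular law applied with $I_Z^2 \subseteq I_Z$; concretely, if $x = a + b \in I_Z$ with $a \in I_Y$ and $b \in I_Z^2$, then $a = x - b \in I_Y \cap I_Z$. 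Substituting this identity into the denominator on the left identifies $\cC_{Z/(Y\cup Z)}$ with $\alpha_*\cC_{(Y\cap Z)/Y}$, and since the map is $\cO_Z$-linear and induced by the chart-independent inclusion $I_Z \hookrightarrow I_Y+I_Z$, it glues to the desired global isomorphism.

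I expect no serious obstacle here: the content is entirely formal once both conormal sheaves are rewritten as subquotients of the single ring $R$. The only point demanding genuine care is the bookkeeping — the two ideal squares are taken in the two \emph{different} quotient rings $\cO_{Y\cup Z}$ and $\cO_Y$, and one must actually verify the modular-law identity above rather than take it for granted. It is worth remarking that smoothness of $Y$, $Z$ and $W$ plays no role in this particular identity; it enters only when the lemma is applied, to guarantee that the conormal sheaves are locally free of the expected ranks.
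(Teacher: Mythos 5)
Your argument is correct: the local descriptions of both conormal sheaves are right, the second isomorphism theorem applied to $I_Z \subseteq I_Y+I_Z$ gives the map, and the modular-law identity $I_Z \cap (I_Y+I_Z^2) = (I_Y\cap I_Z) + I_Z^2$ correctly identifies the kernel, so the two subquotients of $R$ agree. The underlying algebra is the same as in the paper --- everything turns on the scheme-theoretic union and intersection having ideals $I_Y\cap I_Z$ and $I_Y+I_Z$ and on the second isomorphism theorem --- but the packaging is genuinely different. The paper proves the isomorphism one level up, at the ideal sheaves: $\cI_{Z/(Y\cup Z)} \cong \delta_*\cI_{(Y\cap Z)/Y}$ as sheaves on $Y\cup Z$ (second and third isomorphism theorems), and then applies the functor $j^*$ to both sides, using $\cC_{Z/(Y\cup Z)} = j^*\cI_{Z/(Y\cup Z)}$ on the left and affine base change $j^*\delta_* \cong \alpha_* i^*$ for the cartesian square \eqref{eqn:general} on the right. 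Since $j^*$ carries isomorphisms to isomorphisms, the denominator bookkeeping that you rightly flag as the delicate point is absorbed into the base-change lemma: your modular-law identity is precisely the local content of $j^*\delta_* \cong \alpha_* i^*$ here. What your route buys is elementarity --- no base-change lemma, one ambient ring $R$, every module written out explicitly --- at the cost of having to verify that identity by hand and to remark that the canonical local maps glue; what the paper's route buys is a coordinate-free statement that globalizes for free. You are also right that smoothness of $Y$, $Z$ and the ambient variety plays no role in the lemma itself and only enters in its application (Corollary \ref{cor:conormal} and Lemma \ref{lem:sections}).
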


\begin{proof}
    We obtain by the second and the third isomorphism theorems that
    \begin{align*}
    	\cI_{Z/Y\cup Z} &\cong (\cI_{Y/Y\cup Z} + \cI_{Z/Y\cup Z}) / (\cI_{Y/Y\cup Z}) \\
    	&= (\cI_{Y\cap Z/Y\cup Z}) / (\cI_{Y/Y\cup Z}) \\
    	&\cong \delta_\ast \cI_{Y\cap Z/Y}.
    \end{align*} 
    Therefore we obtain
    \begin{align*}
    	\cC_{Z/(Y\cup Z)} &= j^{*}\cI_{Z/(Y\cup Z)} \\
    	&\cong j^{*}\delta_\ast\cI_{(Y\cap Z)/Y} \\
    	&\cong \alpha_\ast i^{*}\cI_{(Y\cap Z)/Y} =\alpha_\ast \cC_{(Y\cap Z)/Y}
    \end{align*}
    as required, where the second isomorphism uses \cite[\href{https://stacks.math.columbia.edu/tag/02KG}{Tag 02KG}]{stacks-project}.
\end{proof}


In our situation we pick subvarieties $Y=\sigma(F_p)$ and  $Z=F_p$ of $\cZ$ in \eqref{eqn:general}, then the morphism $\alpha$ becomes $E_p \stackrel{\alpha}{\hooklongrightarrow} F_p$. Lemma \ref{lem:ideal} immediately yields

\begin{corollary}
	\label{cor:conormal}
	We have $ \cC_{F_p/\pi^{-1}(S_p)} \cong \alpha_*\cO_{E_p}(1) $. \qed
\end{corollary}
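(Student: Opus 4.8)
The plan is to read this off directly from Lemma \ref{lem:ideal} once the abstract diagram \eqref{eqn:general} has been populated with the correct geometric data, and then to finish with the standard normal bundle computation for a point blow-up. Concretely, I would take $Y = \sigma(F_p)$ and $Z = F_p$ inside the smooth variety $\cZ$, exactly as prescribed just before the statement. The two things that need checking before Lemma \ref{lem:ideal} can be invoked are that $Y$ and $Z$ are smooth closed subvarieties (true, since each is isomorphic to $\Bl_p(X)$ by Lemma \ref{lem:fibers}) and that the scheme-theoretic union and intersection appearing in \eqref{eqn:general} agree with the objects we care about. This is precisely the content of Lemma \ref{lem:snc}: it identifies $Y \cup Z = F_p \cup \sigma(F_p) = \pi^{-1}(S_p)$ scheme-theoretically, and exhibits the intersection as the \emph{transverse} (hence scheme-theoretically reduced) locus $Y \cap Z = E_p$, with $\alpha$ in \eqref{eqn:general} becoming the embedding $E_p \stackrel{\alpha}{\hooklongrightarrow} F_p$ of the exceptional divisor.

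With these identifications in place, Lemma \ref{lem:ideal} gives at once
\begin{equation*}
    \cC_{F_p/\pi^{-1}(S_p)} \;=\; \cC_{Z/(Y\cup Z)} \;\cong\; \alpha_\ast \cC_{(Y\cap Z)/Y} \;=\; \alpha_\ast \cC_{E_p/\sigma(F_p)},
\end{equation*}
so the problem reduces to computing the conormal sheaf of $E_p$ inside $\sigma(F_p)$. Here I would use that $\sigma$ is an automorphism of $\cZ$ carrying $F_p$ isomorphically onto $\sigma(F_p)$ and fixing $E_p$ (the exceptional divisor is the fixed locus of $\Sigma_2$ by Lemma \ref{lem:snc}), so that $\sigma(F_p) \cong \Bl_p(X)$ with $E_p$ as its exceptional divisor. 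It then remains to invoke the well-known description of the exceptional divisor of the blow-up of a smooth $d$-fold at a point: $E_p \cong \PP^{d-1}$ and its normal bundle is $\cN_{E_p/\Bl_p(X)} \cong \cO_{\PP^{d-1}}(-1)$, whence the conormal sheaf $\cC_{E_p/\sigma(F_p)} \cong \cN_{E_p/\sigma(F_p)}^\vee \cong \cO_{E_p}(1)$. Substituting this back yields $\cC_{F_p/\pi^{-1}(S_p)} \cong \alpha_\ast \cO_{E_p}(1)$, as claimed.

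I do not expect a serious obstacle in this argument, which is why the authors flag it as an immediate consequence; the only point deserving care is the bookkeeping of scheme structures, i.e.\ confirming that the intersection $F_p \cap \sigma(F_p)$ really carries its reduced structure $E_p$ so that $\cC_{(Y\cap Z)/Y}$ is the honest conormal bundle of the exceptional divisor rather than some thickening. This is guaranteed by the transversality established explicitly in the local computation of Lemma \ref{lem:snc}, so the corollary follows formally.
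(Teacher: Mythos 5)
Your argument is exactly the one the paper intends: substitute $Y=\sigma(F_p)$, $Z=F_p$ into Lemma \ref{lem:ideal}, use Lemma \ref{lem:snc} to identify the scheme-theoretic union and (transverse, hence reduced) intersection with $\pi^{-1}(S_p)$ and $E_p$, and compute $\cC_{E_p/\sigma(F_p)}\cong\cO_{E_p}(1)$ from the standard normal bundle $\cO_{\PP^{d-1}}(-1)$ of the exceptional divisor of a point blow-up. This matches the paper's (deliberately immediate) derivation, and your extra care about the reduced scheme structure on the intersection is exactly the right point to flag.
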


The following result is the key to the main theorem of this section:

\begin{lemma}
	\label{lem:sections}
	If $d \geqslant 2$, then we have $\dim H^0(S_p, \cN_{S_p/M}) = d$. 
\end{lemma}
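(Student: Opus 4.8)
The plan is to transport the computation from $S_p \subseteq M$ to $F_p \cong \Bl_p(X)$ through the isomorphism $g := \pi|_{F_p}\colon F_p \xrightarrow{\sim} S_p$ of Lemma \ref{lem:fibers}, and to exhibit $g^\ast\cN_{S_p/M}$ as an extension of an explicit torsion sheaf supported on $E_p$ by a trivial bundle. Since $g$ is an isomorphism we have $H^0(S_p,\cN_{S_p/M}) = H^0(F_p, g^\ast\cN_{S_p/M})$, so it suffices to understand the latter.

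I would assemble two ingredients. First, because $\tau$ is flat (Lemma \ref{lem:fibers}) with smooth fibres $F_p$ and smooth total space $\cZ$, the morphism $\tau$ is in fact smooth, so the conormal bundle of the fibre is trivial: $\cC_{F_p/\cZ}\cong\cO_{F_p}^{\oplus d}$. Second, the quotient morphism $\pi$ is a finite morphism of smooth varieties, hence flat; consequently $\pi^\ast\cI_{S_p/M}=\cI_{\pi^{-1}(S_p)/\cZ}$, and restricting to $F_p\subseteq\pi^{-1}(S_p)$ yields $g^\ast\cC_{S_p/M}\cong\cC_{\pi^{-1}(S_p)/\cZ}\big|_{F_p}$. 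Feeding these into the conormal sequence of the tower $F_p\subseteq\pi^{-1}(S_p)\subseteq\cZ$, and invoking Corollary \ref{cor:conormal} to identify $\cC_{F_p/\pi^{-1}(S_p)}\cong\alpha_\ast\cO_{E_p}(1)$, gives a right-exact sequence
\[
 g^\ast\cC_{S_p/M}\longrightarrow\cO_{F_p}^{\oplus d}\longrightarrow\alpha_\ast\cO_{E_p}(1)\longrightarrow 0.
\]
As the cokernel is torsion while both left-hand terms are locally free of rank $d$, the first map is injective (its kernel would be a torsion subsheaf of a locally free sheaf, hence zero), so this is in fact a short exact sequence.

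Finally I would dualize. Applying $\sHom(-,\cO_{F_p})$ kills the torsion term in degree $0$ and, since $\cO_{F_p}^{\oplus d}$ is locally free, produces $g^\ast\cN_{S_p/M}$ as an extension
\[
 0\longrightarrow\cO_{F_p}^{\oplus d}\longrightarrow g^\ast\cN_{S_p/M}\longrightarrow\sExt^1\!\big(\alpha_\ast\cO_{E_p}(1),\cO_{F_p}\big)\longrightarrow 0.
\]
The decisive local computation is this $\sExt^1$ term: by the fundamental local isomorphism for the smooth divisor $E_p\subseteq F_p$ one has $\sExt^1(\alpha_\ast L,\cO_{F_p})\cong\alpha_\ast\!\big(L^{-1}\otimes\cN_{E_p/F_p}\big)$, and since $\cN_{E_p/F_p}\cong\cO_{E_p}(-1)$ and $L=\cO_{E_p}(1)$ this equals $\alpha_\ast\cO_{E_p}(-2)$. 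Taking cohomology and using $H^0\big(\PP^{d-1},\cO(-2)\big)=0$ for $d\geq 2$, the long exact sequence collapses to $H^0(F_p,g^\ast\cN_{S_p/M})\cong H^0(F_p,\cO_{F_p}^{\oplus d})=\CC^d$, whence $\dim H^0(S_p,\cN_{S_p/M})=d$. I expect the main obstacle to be pinning down this $\sExt^1$ with the correct twist: the cancellation to $\cO(-2)$ is precisely what makes the answer independent of the geometry of $X$ (in particular of $H^1(\cO_X)$), and an incorrect dualizing twist would leave a spurious global section. The second delicate point is the flatness-based identification $g^\ast\cC_{S_p/M}\cong\cC_{\pi^{-1}(S_p)/\cZ}\big|_{F_p}$, which must be justified carefully as it is what links the ambient $M$-side conormal to the $\cZ$-side data controlled by Corollary \ref{cor:conormal}.
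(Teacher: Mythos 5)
Your proposal is correct and follows essentially the same route as the paper: transport to $F_p$ via $\pi|_{F_p}$, run the conormal sequence of $F_p\subseteq\pi^{-1}(S_p)\subseteq\cZ$ with the three identifications (trivial conormal of the fibre via flatness of $\tau$, pullback of $\cC_{S_p/M}$ via flatness of $\pi$, and Corollary \ref{cor:conormal}), dualize, and kill $H^0$ of $\alpha_\ast\cO_{E_p}(-2)$. The only cosmetic difference is that you compute the $\sExt^1$ term by the fundamental local isomorphism with $\cN_{E_p/F_p}\cong\cO_{E_p}(-1)$, whereas the paper uses the explicit two-term resolution of $\alpha_\ast\cO_{E_p}(-E_p)$ --- these are the same calculation.
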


\begin{proof}
	We divide the proof in two steps.
	
	\textsc{Step 1.} We claim that $\cN_{S_p/M}$ fits into the exact sequence
	\begin{equation}
		\label{eqn:fit}
		0 \lra \cO_{F_p}^{\oplus d} \lra (\pi|_{F_p})^* \cN_{S_p/M} \lra \sExt^1_{F_p}(\alpha_*\cO_{E_p}(1), \cO_{F_p}) \lra 0.
	\end{equation}	
	We consider the chain of closed embeddings
	$$ F_p \stackrel{\iota}{\hooklongrightarrow} \pi^{-1}(S_p) \hooklongrightarrow \cZ. $$
	By \cite[Proposition 16.2.7]{EGA4}, we get the exact sequence of conormal sheaves
	\begin{equation}
		\label{eqn:seq}
		\iota^{*}\cC_{\pi^{-1}(S_p)/\cZ} \lra \cC_{F_p/\cZ} \lra \cC_{F_p/\pi^{-1}(S_p)} \lra 0.
	\end{equation}
   By Lemma \ref{lem:fibers}, $\tau: \cZ \rightarrow X$ is flat, thus by \cite[Proposition 16.2.2 (iii)]{EGA4} we get
	\begin{equation}
		\label{eqn:trivial}
		\cC_{F_p/\cZ} = (\tau|_{F_p})^*\cC_{\{p\}/X} = (\tau|_{F_p})^*\cO_{\{p\}}^{\oplus d}=\cO_{F_p}^{\oplus d}.
	\end{equation}
   Furthermore since $S_p\hookrightarrow M$ is a regular embedding of codimension $d$, the sheaf $\cC_{S_p/M}$ is locally free of rank $d$. It follows by the flatness of $\pi: \cZ \rightarrow M$ that
    \begin{equation}
		\label{eqn:nontri}
		\iota^{*}\cC_{\pi^{-1}(S_p)/\cZ} = \iota^\ast (\pi|_{\pi^{-1}(S_p)})^\ast \cC_{S_p/M} = (\pi|_{F_p})^\ast \cC_{S_p/M}
    \end{equation}
    is also locally free of rank $d$. Therefore the first two terms in \eqref{eqn:seq} are locally free sheaves of rank $d$ and the third one is by Corollary \ref{cor:conormal} torsion with support $E_p$. It follows that the first arrow in \eqref{eqn:seq} is injective. By dualizing \eqref{eqn:seq} we obtain
    \begin{equation*}
		0 \lra \cN_{F_p/\cZ} \lra (\pi|_{F_p})^* \cN_{S_p/M} \lra \sExt^1_{F_p}(\cC_{F_p/\pi^{-1}(S_p)}, \cO_{F_p}) \lra 0.
 	\end{equation*}
 	Together with \eqref{eqn:trivial}, \eqref{eqn:nontri} and Corollary \ref{cor:conormal} we obtain the claim \eqref{eqn:fit}.    

	\textsc{Step 2.} We claim that
	\begin{equation}
		\label{eqn:vanish}
		H^0(F_p,\sExt^1_{F_p}(\alpha_{*}\cO_{E_p}(1),\cO_{F_p})) = 0.
	\end{equation}	
	Indeed, the sheaf $\alpha_{*}\cO_{E_p}(1)=\alpha_{*}\cO_{E_p}(-E_p)$ admits the following resolution
   \begin{equation*}
       \begin{tikzcd}
       0 \arrow{r} & \cO_{F_p}(-2E_p) \arrow{r} & \cO_{F_p}(-E_p)\arrow{r} & \alpha_{*}\cO_{E_p}(-E_p)\arrow{r} & 0
       \end{tikzcd}
   \end{equation*}
   Dualizing this exact sequence shows 
\begin{equation*}
\sExt^1_{F_p}(\alpha_{*}\cO_{E_p}(-E_p),\cO_{F_p})=\alpha_{*}\cO_{E_p}(2E_p)=\alpha_{*}\cO_{E_p}(-2).   
\end{equation*}
   Using $E_p\cong \PP^{d-1}$ and $d \geqslant 2$, we finally get:
   \begin{equation*}
       H^0(F_p,\sExt^1_{F_p}(\alpha_{*}\cO_{E_p}(1),\cO_{F_p}))=H^0(F_p,\alpha_{*}\cO_{E_p}(-2))=H^0(E_p,\cO_{E_p}(-2))=0.
   \end{equation*}
   
   We conclude the proof by combining the long exact sequence in cohomology associated to \eqref{eqn:fit} and the vanishing result \eqref{eqn:vanish}.
\end{proof}

The following lemma is the main source for finding components of moduli spaces. The proof follows literally from \cite[Theorem 3.6]{newstead}.

\begin{lemma}
	\label{image}
	Let $X$ be a smooth projective variety of dimension $d$ and $Y$ a projective scheme. Assume that a morphism $f: X \to Y$ is injective on closed points, and $\dim T_yY = d$ for each closed point $y \in f(X)$. Then $f$ is an isomorphism from $X$ to a connected component of $Y$.
\end{lemma}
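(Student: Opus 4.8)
The plan is to realize $f(X)$, endowed with its reduced structure, as a connected component of $Y$ through a combination of dimension counting and a local computation on tangent spaces, and then to show that $f$ maps $X$ isomorphically onto it.

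First I would set $Z := f(X)_{\mathrm{red}}$. Since $X$ is projective, $f$ is proper, so $Z$ is closed in $Y$; as the image of the irreducible variety $X$ it is irreducible. Because $f$ is injective on closed points, every fibre of the induced surjection $X \to Z$ is a single, hence $0$-dimensional, point, so the theorem on the dimension of fibres gives $\dim Z = \dim X = d$.

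Next I would upgrade the tangent hypothesis into smoothness and openness. Fix a closed point $y \in Z$. On one hand $\dim Z \leqslant \dim T_y Z \leqslant \dim T_y Y = d$, and since $\dim Z = d$ all these are equalities; thus $Z$ is smooth at $y$ with $T_y Z = T_y Y$. On the other hand $d = \dim_y Z \leqslant \dim_y Y \leqslant \dim T_y Y = d$, so $\dim_y Y = \dim T_y Y = d$ and $Y$ is smooth at $y$ as well. Hence $\cO_{Y,y}$ is a regular local domain of dimension $d$, while $Z$ is cut out near $y$ by an ideal $I$ with $\dim \cO_{Y,y}/I = d$; as a regular local ring is catenary, the only prime of coheight $d$ in a $d$-dimensional domain is $(0)$, forcing $I = 0$ in $\cO_{Y,y}$. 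Therefore $Z$ and $Y$ coincide in a neighbourhood of $y$, so $Z$ is open as well as closed, and being irreducible it is a single connected component of $Y$.

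Finally, $f\colon X \to Z$ is a proper morphism of varieties over $\CC$ that is bijective on closed points; properness together with quasi-finiteness makes it finite, and in characteristic zero a bijective finite morphism onto the normal (indeed smooth) variety $Z$ has degree one and so is birational. By Zariski's main theorem a finite birational morphism onto a normal variety is an isomorphism, which identifies $X$ with the connected component $Z$ of $Y$. The step I expect to be the main obstacle is the middle one: extracting genuine openness of the image from the pointwise equality $\dim T_y Y = d$. The crux is the observation that this hypothesis forces $Y$ itself, not merely $Z$, to be smooth of dimension $d$ at every point of $Z$; only then does the local statement that a closed subvariety of the same dimension as an ambient regular local domain must fill it up yield openness, rather than the weaker conclusion that $f$ is unramified.
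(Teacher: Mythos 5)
Your proof is correct and follows essentially the same route as the paper: the image is closed of dimension $d$, the tangent-space hypothesis forces $Y$ itself to be smooth of dimension $d$ along $f(X)$ so that the image is an irreducible (hence connected) component, and Zariski's Main Theorem upgrades the bijective morphism to an isomorphism. The only difference is that you spell out the local-algebra and degree-one details that the paper delegates to citations.
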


\begin{proof}
	Since $X$ is complete, $f(X)$ is a closed subvariety of $Y$ of dimension $d$. Since $\dim T_yY = d$ for each closed point $y \in f(X)$, it follows that $Y$ is smooth of dimension $d$ at each closed point $y \in f(X)$ by \cite[Theorem 6.28]{Wed10}, hence $f(X)$ must be a smooth irreducible component of $Y$, which is also a connected component of $Y$. Finally, since $f: X \to f(X)$ is a morphism between smooth projective varieties and bijective on closed points, it is an isomorphism by Zariski's Main Theorem.
\end{proof}

Combining the above results, we can now give our first main result:

\begin{theorem}
	\label{thm:comp}
	Any smooth projective variety $X$ of dimension $d \geqslant 2$ is isomorphic to a smooth connected component of a moduli space of stable sheaves with trivial determinants on $\Hilb^2(X)$, by viewing $\cI_\cZ$ as a family of coherent sheaves on $\Hilb^2(X)$ parametrized by $X$. 
\end{theorem}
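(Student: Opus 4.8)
The plan is to exhibit $\cI_\cZ$ as a flat family over $X$ of stable sheaves with trivial determinant on $M=\Hilb^2(X)$, so that its classifying morphism $f\colon X\to\cM$ lands in a suitable moduli space, and then to verify the two hypotheses of Lemma \ref{image}: that $f$ is injective on closed points, and that $\dim T_{f(p)}\cM=d$ at every closed point $p\in X$. The first hypothesis is elementary, while the second is exactly where Lemma \ref{lem:sections} must be converted into a statement about the tangent space of $\cM$.

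First I would establish flatness and identify the fibres. Since $\tau\colon\cZ\to X$ is flat by Lemma \ref{lem:fibers}, the sheaf $\cO_\cZ$ is flat over $X$ through the first projection; applying $-\otimes_{\cO_X}k(p)$ to the structure sequence $0\to\cI_\cZ\to\cO_{X\times M}\to\cO_\cZ\to0$ and using $\mathrm{Tor}_1^{\cO_X}(\cO_\cZ,k(p))=0$ shows simultaneously that $\cI_\cZ$ is flat over $X$ and that its fibre over $p$ is the ideal sheaf $\cI_{S_p/M}$. Each fibre is a rank-one torsion-free sheaf on the smooth projective (hence integral) variety $M$, so it is automatically Gieseker stable with respect to any fixed polarization, and it has trivial determinant because $S_p$ has codimension $d\geq2$. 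As the Hilbert polynomial is constant over the connected base $X$, we obtain a classifying morphism $f\colon X\to\cM$, $p\mapsto[\cI_{S_p/M}]$. For injectivity on closed points, an isomorphism $\cI_{S_p/M}\cong\cI_{S_{p'}/M}$ induces on reflexive hulls an automorphism of $\cO_M$, i.e. a nonzero scalar; hence the two ideals coincide inside $\cO_M$, forcing $S_p=S_{p'}$ as subschemes and therefore $p=p'$, since $p$ is recovered as the common point of the length-two subschemes parametrised by $S_p$.

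It remains to compute the tangent space. As the determinant is fixed, $T_{[\cI_{S_p}]}\cM=\Ext^1_M(\cI_{S_p},\cI_{S_p})_0$, the trace-free summand. I would read this off from the long exact sequence obtained by applying $\Hom_M(\cI_{S_p},-)$ to $0\to\cI_{S_p}\to\cO_M\to\cO_{S_p}\to0$. Because $S_p$ has codimension at least two, $\sHom(\cI_{S_p},\cO_M)=\cO_M$, so $\Hom(\cI_{S_p},\cI_{S_p})=\Hom(\cI_{S_p},\cO_M)=\CC$ and the first connecting map is an isomorphism; this produces an injection $H^0(S_p,\cN_{S_p/M})=\Hom(\cI_{S_p},\cO_{S_p})\hookrightarrow\Ext^1(\cI_{S_p},\cI_{S_p})$, using $\sHom(\cI_{S_p},\cO_{S_p})=\cN_{S_p/M}$ for the regular embedding $S_p\hookrightarrow M$. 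One then checks that this image consists of trace-free classes (deformations of $S_p$ in its Hilbert scheme preserve the determinant) and that it exhausts the trace-free part, yielding $\Ext^1_0(\cI_{S_p},\cI_{S_p})\cong H^0(S_p,\cN_{S_p/M})$. By Lemma \ref{lem:sections} the right-hand side has dimension $d$, so $\dim T_{f(p)}\cM=d$, and Lemma \ref{image} then identifies $X$ with a smooth connected component of $\cM$.

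The main obstacle is precisely this last identification of the trace-free $\Ext^1$ with $H^0(\cN_{S_p/M})$. The injection is immediate, but surjectivity onto the trace-free part requires controlling $\Ext^1(\cI_{S_p},\cO_M)$ together with the trace map; it is transparent when $H^1(\cO_M)=0$, whereas in general one must use the splitting $\Ext^1(\cI_{S_p},\cI_{S_p})=\Ext^1_0\oplus H^1(\cO_M)$ and verify that the map into $\Ext^1(\cI_{S_p},\cO_M)$ detects exactly the $H^1(\cO_M)$-summand. The low-codimension case $d=2$ is the most delicate, since then $\Ext^2(\cO_{S_p},\cO_M)$ need not vanish and one must show that the corresponding connecting homomorphism does not contribute; here the explicit description of $\cN_{S_p/M}$ furnished by Corollary \ref{cor:conormal} and the sequence \eqref{eqn:fit} in Lemma \ref{lem:sections} should do the work.
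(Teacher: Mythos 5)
Your overall architecture --- flatness of $\cI_\cZ$ over $X$, stability of the fibres $\cI_{S_p}$, injectivity of $f$ on closed points, and reduction of the tangent-space count to Lemma \ref{lem:sections} followed by Lemma \ref{image} --- coincides with the paper's, and those parts are fine. The genuine gap is exactly where you place it yourself: the identification $T_{[\cI_{S_p}]}\cM \cong \Ext^1_M(\cI_{S_p},\cI_{S_p})_0 \cong H^0(S_p,\cN_{S_p/M})$ is asserted but never proved. You correctly isolate the two obstacles (controlling the $H^1(\cO_M)$-summand via the trace map, and, for $d=2$, the possibly nonzero $\Ext^2_M(\cO_{S_p},\cO_M)\cong H^0(S_p,\det\cN_{S_p/M})$ entering the connecting homomorphism), but the remedy you propose does not address them: Corollary \ref{cor:conormal} and the sequence \eqref{eqn:fit} compute the local structure of $\cN_{S_p/M}$ and its global sections, i.e.\ information about $\Hom_M(\cI_{S_p},\cO_{S_p})$, and say nothing about $\Ext^1_M(\cI_{S_p},\cO_M)$, the trace map, or the connecting map out of $\Hom_M(\cI_{S_p},\cO_{S_p})$. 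Note also that $H^1(\cO_M)$ is genuinely nonzero for general $X$ here (e.g.\ $M=\Hilb^2(X)$ with $X$ an abelian surface), so the trace-free subtlety cannot be waved away.

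The paper avoids this computation entirely: since the fibres are ideal sheaves of subschemes of codimension $d\geq 2$ and the determinant is fixed to be trivial, \cite[Lemma B.5.6]{KPS18} identifies $\cM$ with the Hilbert scheme of subschemes of $\Hilb^2(X)$ having the Hilbert polynomial of $S_p$, whose tangent space at $[S_p]$ is canonically $\Hom_{\Hilb^2(X)}(\cI_{S_p},\cO_{S_p})\cong H^0(S_p,\cN_{S_p/\Hilb^2(X)})$; Lemma \ref{lem:sections} then applies directly and no analysis of $\Ext^1(\cI_{S_p},\cI_{S_p})$ is needed. To complete your argument you should either invoke such a comparison result or carry out the trace-free $\Ext^1$ computation in full; as written, the decisive step is missing.
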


\begin{proof}
   	By Lemma \ref{lem:fibers}, $\cZ$ is flat over $X$ hence $\cI_\cZ$ can be viewed as a flat family of sheaves on $\Hilb^2(X)$ parametrized by $X$. For each closed point $p \in X$, let $(\cI_\cZ)_{p}$ be the restriction of $\cI_\cZ$ on the fiber $\{ p \} \times \Hilb^2(X)$. Then $(\cI_\cZ)_{p}$ is the ideal sheaf $\cI_{S_p}$ of the closed embedding of $S_p$ into $\Hilb^2(X)$, hence is a stable sheaf of rank $1$. Therefore we obtain an induced classifying morphism
	\begin{equation}
		\label{eqn:Hilb-class}
		f: X \longrightarrow \cM, \quad p \longmapsto [ \cI_{S_p} ]
	\end{equation}
	where $\cM$ denotes the moduli space of stable sheaves on $\Hilb^2(X)$ of the class of $\cI_{S_p}$ with trivial determinants. By \cite[Lemma B.5.6]{KPS18}, $\cM$ is isomorphic to the Hilbert scheme of subschemes of $\Hilb^2(X)$ which have the same Hilbert polynomials as $S_p$ since $d \geqslant 2$. It is easy to see that $f$ is injective on closed points. Indeed, for two different closed points $p, q \in X$, $S_p$ and $S_q$ are different subschemes of $\Hilb^2(X)$ of codimension $d \geqslant 2$, hence $\cI_{S_p}$ and $\cI_{S_q}$ are non-isomorphic ideal sheaves. On the other hand, for any closed point $p \in X$, we have
	$$ T_{[\cI_{S_p}]}\cM \cong \Hom_{\Hilb^2(X)}(\cI_{S_p}, \cO_{S_p}) \cong H^0(S_p, \cN_{S_p/\Hilb^2(X)}). $$
	Hence by Lemma \ref{lem:sections}, we have
	$$ \dim T_{[\cI_{S_p}]}\cM = d. $$
	Therefore we conclude by Lemma \ref{image} that the morphism \eqref{eqn:Hilb-class} embeds $X$ as a smooth connected component of $\cM$.
\end{proof}

\section{Hilbert schemes of points on K3 surfaces}\label{sec2}

What is particular interesting to us is the case of K3 surfaces. The technique of $\PP^n$-functors allows us to obtain similar results for their Hilbert schemes of $0$-dimension subschemes of arbitrary length. We first recall the following notion of $\PP^n$-functors and its implications.

\begin{definition}{\cite[Definition 4.1]{add16}}
\label{functor}
A functor $F:\cA \rightarrow \cB$ between triangulated categories with adjoints $L$ and $R$ is called a $\PP^n$-functor if:
\begin{enumerate}[(a)]
    \item There is an autoequivalence $H$ of $\cA$ such that $$RF\cong \id\oplus H\oplus H^2\oplus\ldots\oplus H^n$$
    \item The map $$ HRF \hookrightarrow RFRF \xrightarrow{R \epsilon F} RF$$ written in components $$H\oplus H^2\oplus\ldots\oplus H^{n+1} \rightarrow \id \oplus H\oplus\ldots\oplus H^n$$ is of the form $$\begin{pmatrix} * & * & \cdots & * & * \\ 1 & * & \cdots & * & * \\ 0 & 1 & \cdots & * & * \\ \vdots & \vdots & \ddots & \vdots & \vdots \\ 0 & 0 & \cdots & 1 & * \end{pmatrix}$$
    \item We have $R\cong H^nL$. (If $\cA$ and $\cB$ have Serre functors, this is equivalent to $\cS_{\cB}FH^n\cong F\cS_{\cA}$.)
\end{enumerate}
\end{definition}

More about $\PP^n$-functors and examples can be found in \cite[\S 4]{add16}.

We will focus on the case where $\cA=D^b(X)$ and $\cB=D^b(Y)$ for two smooth projective varieties $X$ and $Y$ such that $F=\Phi_{\cF}$ is an integral functor with kernel $\cF\in D^b(X\times Y)$. In fact, we are mostly interested in the case where $\cF$ is actually a sheaf on $X\times Y$ and the autoequivalence $H=[-2]$. In this case condition (a) can be stated as
\begin{equation*}
    RF\cong \id\otimes H^\ast(\PP^n,\CC).
\end{equation*}
We will use the following simple consequence under this setting
\begin{proposition}{\cite[\S 2.1]{add16-2}}
\label{ext}
Assume $X$ and $Y$ are smooth projective varieties and $\cF$ is a coherent sheaf on $X\times Y$, flat over $X$, such that the integral functor $F=\Phi_{\cF}$ with kernel $\cF$ is a $\PP^n$-functor with associated autoequivalence $H=[-2]$. Then for any closed points $x,y\in X$ there is an isomorphism:
$$ \Ext^\ast_{Y}(\cF_x,\cF_y)\cong \Ext^\ast_{X}(\cO_x,\cO_y)\otimes H^\ast(\PP^n,\CC),$$
where $\cF_x$ and $\cF_y$ are fibers of $\cF$ over the closed points $x$ and $y$ respectively. \qed
\end{proposition}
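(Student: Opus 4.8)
The plan is to reduce the computation of $\Ext^*_Y(\cF_x,\cF_y)$ to the defining decomposition of a $\PP^n$-functor, by first recognizing the fibers $\cF_x$ as the images of skyscraper sheaves under $F=\Phi_{\cF}$. The crucial preliminary observation is that, because $\cF$ is flat over $X$, the fiber $\cF_x=\cF|_{\{x\}\times Y}$ is naturally identified with $F(\cO_x)$ in $D^b(Y)$. Indeed, $\Phi_{\cF}(\cO_x)=Rp_{Y*}(\cF\otimes^L Lp_X^*\cO_x)$; flatness of $\cF$ over $X$ guarantees that the derived restriction $\cF\otimes^L Lp_X^*\cO_x$ has no higher cohomology and agrees with the honest fiber sheaf supported on $\{x\}\times Y$, and since $p_Y$ restricts to an isomorphism on this fiber, the pushforward returns exactly $\cF_x$. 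Hence $F(\cO_x)\cong\cF_x$ and $F(\cO_y)\cong\cF_y$ as objects of $D^b(Y)$, both concentrated in degree zero.

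Next I would pass to the adjunction. Writing $R$ for the right adjoint of $F$, this gives
$$ \Ext^*_Y(\cF_x,\cF_y)=\Hom^*_{D^b(Y)}(F\cO_x,F\cO_y)\cong\Hom^*_{D^b(X)}(\cO_x,RF\cO_y). $$
At this point I would invoke property (a) of Definition \ref{functor} together with the hypothesis $H=[-2]$, which yields $RF\cong\id\oplus[-2]\oplus[-4]\oplus\cdots\oplus[-2n]$, and therefore
$$ RF\cO_y\cong\bigoplus_{k=0}^n\cO_y[-2k]\cong\cO_y\otimes_{\CC}H^*(\PP^n,\CC). $$
Substituting this into the previous line and using that $\Hom^*_{D^b(X)}(\cO_x,\cO_y[-2k])$ is a copy of $\Ext^*_X(\cO_x,\cO_y)$ placed in degrees shifted up by $2k$, the graded $\Hom$ splits as a direct sum over $k=0,\dots,n$ which is precisely the graded tensor product $\Ext^*_X(\cO_x,\cO_y)\otimes H^*(\PP^n,\CC)$, the classes of $H^*(\PP^n,\CC)$ living in degrees $0,2,\dots,2n$ and accounting exactly for the shifts. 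This produces the claimed isomorphism.

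The only delicate point, and the step I would treat with most care, is the identification $F(\cO_x)\cong\cF_x$: it rests essentially on the flatness of $\cF$ over $X$, which is what forces the derived and underived restrictions to coincide and makes the fibers genuine sheaves rather than complexes with higher cohomology. Everything after that is formal, using only the adjunction $F\dashv R$ and the splitting of $RF$ built into the definition of a $\PP^n$-functor. In particular, I do not expect to need the full strength of conditions (b) and (c) of Definition \ref{functor}; only the decomposition in (a), specialized to the prescribed autoequivalence $H=[-2]$, enters the argument.
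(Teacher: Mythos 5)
Your argument is correct and is essentially the intended one: the paper does not actually prove Proposition \ref{ext} but quotes it from \cite[\S 2.1]{add16-2}, and the derivation there is precisely your chain --- flatness of $\cF$ over $X$ identifies $\cF_x$ with $F(\cO_x)$, the adjunction $F\dashv R$ converts $\Ext^\ast_Y(F\cO_x,F\cO_y)$ into $\Hom^\ast_{D^b(X)}(\cO_x,RF\cO_y)$, and condition (a) of Definition \ref{functor} with $H=[-2]$ supplies the splitting $RF\cO_y\cong\cO_y\otimes H^\ast(\PP^n,\CC)$. You are also right that only condition (a) is used and that the flatness hypothesis is exactly what makes $F(\cO_x)$ a sheaf rather than a complex.
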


The following list of $\PP^n$-functors will be of interest to us:
\begin{enumerate}[i)]
\item For a K3 surface $S$, $\Hilb^n(S)$ is a fine moduli space with universal ideal sheaf $\cI_{\cZ}$. The integral functor $\Phi_{\cI_{\cZ}}:D^b(S)\rightarrow D^b(\Hilb^n(S))$ is a $\PP^{n-1}$-functor with associated autoequivalence $H=[-2]$; see \cite[Theorem 3.1]{add16}.
\item Let $\Kum_n(A)$ be the generalized Kummer variety of an abelian surface $A$ with universal ideal sheaf $\cI_{\cZ}$. For any $n\geq 2$, the integral functor $\Phi_{\cI_{\cZ}}:D^b(A)\rightarrow D^b(\Kum_n(A))$ is a $\PP^{n-1}$-functor with associated autoequivalence $H=[-2]$; see \cite[Theorem 4.1]{meac}.
\item Let $S$ be a K3 surface with $\Pic(S) = \ZZ[H]$ where $H$ is an ample generator of degree $2g-2$. Assume $M$ is the fine moduli space of stable sheaves on $S$ of Mukai vector $(0, H, d+1-g)$ for some $d$ and $\cU$ is the universal sheaf over $S \times M$. Then the integral functor $\Phi_{\cU}: D^b(S) \rightarrow D^b(M)$ is a $\PP^{g-1}$-functor with associated autoequivalence $H=[-2]$; see \cite[Theorem A]{add16-2}.
\end{enumerate}

We give a first application of $\PP^n$-functors to our problem: let $S$ be a K3 surface and $M = \Hilb^n(S)$ for some positive integer $n$. Then $M$ is a fine moduli space and the ideal sheaf $\cI_{\cZ}$ of the universal family $\cZ$ is the universal sheaf on $S\times M$. It is well-known that $M$ is an irreducible holomorphic symplectic manifold. The flatness of $\cI_{\cZ}$ over $S$ follows immediately from the following result: 

\begin{lemma}{\cite[Theorem 2.1]{krug}}
	\label{lem:HilbNFlat}
   For every smooth variety $X$ and every positive integer $n$, the universal family $\cZ\subset X\times M$ is flat over $X$. \qed
\end{lemma}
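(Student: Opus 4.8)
The plan is to establish flatness of $\tau\colon\cZ\to X$ by reducing everything to the model case $X=\AA^d$, where a homogeneity argument settles it almost immediately. The point of this route is that it avoids computing fiber dimensions or proving that $\cZ$ is Cohen--Macaulay: for $d\geqslant 3$ the Hilbert scheme $\Hilb^n(X)$ (and hence $\cZ$) is typically singular, so the smoothness criterion used in Lemma \ref{lem:fibers} is no longer available. Since flatness is local on the base and may be tested after completing (completion of a Noetherian local ring is faithfully flat), it suffices to show that $\cO_{\cZ,(x,\xi)}$ is flat over $\cO_{X,x}$ at every closed point $(x,\xi)\in\cZ$, where $\xi\in\Hilb^n(X)$ is a length-$n$ subscheme with $x\in\Supp(\xi)$.

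First I would localize at the support of $\xi$. Writing $\Supp(\xi)=\{x=p_1,\dots,p_r\}$ with $\xi$ of length $n_i$ at $p_i$, and choosing pairwise disjoint open neighbourhoods $U_i\ni p_i$, the open subscheme of $\Hilb^n(X)$ parametrizing subschemes of length $n_i$ inside each $U_i$ is isomorphic to the product $\prod_i\Hilb^{n_i}(U_i)$, over which $\cZ$ restricts to a disjoint union of the pulled-back universal families. As $\tau$ records only the point $x\in U_1$, flatness of $\tau$ at $(x,\xi)$ over $X$ becomes flatness over $U_1$ of the universal family of $\Hilb^{n_1}(U_1)$ at the point corresponding to the part of $\xi$ concentrated at $p_1$. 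This reduces the problem to the case where $\xi$ is supported at the single point $x$.

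Next I would pass to affine space. Choosing étale local coordinates $U_1\to\AA^d$ with $x\mapsto 0$ induces an isomorphism $\hat\cO_{X,x}\cong k[[t_1,\dots,t_d]]$ of completed local rings, and subschemes concentrated at a point depend only on this formal neighbourhood; since flatness is detected on completions, the flatness question at $(x,\xi)$ is identified with the corresponding one for the universal family $\cZ_{\AA^d}\subset\AA^d\times\Hilb^n(\AA^d)$ at the origin. On $\AA^d$ the translation action of $\AA^d$ on itself lifts to an action on $\Hilb^n(\AA^d)$ (translating subschemes) and on $\cZ_{\AA^d}$, under which $\tau$ is equivariant. Hence the locus of points of $\AA^d$ over which $\tau$ is flat is invariant under all translations; by generic flatness (with $\AA^d$ integral and $\cO_{\cZ_{\AA^d}}$ coherent) it is nonempty, and a nonempty translation-invariant subset of $\AA^d$ is everything. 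Thus $\tau$ is flat over every closed point, in particular over the origin, which gives the lemma.

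The step I expect to be the main obstacle is the formal-local reduction carried out in the second and third paragraphs: the decomposition of $\Hilb^n(X)$ (and of $\cZ$) by support into a product near $\xi$, together with the verification that the punctual structure of the Hilbert scheme and of its universal family is insensitive to an étale change of the ambient variety. Both are standard in spirit but require care to set up rigorously at the level of local rings and their completions. By contrast, once the reduction to $\AA^d$ is in place, the concluding homogeneity argument is entirely clean and uses nothing beyond generic flatness and translation invariance.
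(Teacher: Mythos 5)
The paper gives no proof of this lemma: it is quoted verbatim from Krug--Rennemo \cite[Theorem 2.1]{krug}, so there is no internal argument to compare yours against. That said, your proof is correct, and its strategy (localize, split $\Hilb^n(X)$ and $\cZ$ into a product near $\xi$ according to the support of $\xi$, transport the question to $\AA^d$ via an \'etale chart and completions, then combine generic flatness with equivariance of $\tau$ under the transitive translation action) is essentially that of the cited reference. Two points should be made explicit in a full write-up. The step you flag is genuinely the crux: what is needed is that an \'etale map $g\colon U\to\AA^d$ induces an \'etale morphism from the open locus of $\Hilb^n(U)$ where $g$ embeds the subscheme into $\Hilb^n(\AA^d)$, carrying universal family to universal family; this is what identifies $\hat{\cO}_{\cZ,(x,\xi)}$ with $\hat{\cO}_{\cZ_{\AA^d},(0,g_*\xi)}$ compatibly with $\hat{\cO}_{U,x}\cong\hat{\cO}_{\AA^d,0}$. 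The weaker observation that punctual subschemes depend only on the formal neighbourhood of $x$ controls only the fibre over $x$, not a neighbourhood of $(x,\xi)$ in $\cZ$, and would not suffice by itself. Second, the translation argument yields flatness over all closed points of $\AA^d$; to conclude flatness of $\tau$ you should invoke openness of the flat locus in $\cZ$ (so the non-flat locus, being closed, would have to contain a closed point) or, equivalently, that a finitely generated module over a Noetherian ring is flat once it is flat at every maximal ideal. Your remark that the miracle-flatness route used for Lemmas \ref{lem:fibers} and \ref{lem:KumFlat} is unavailable here for $\dim X\geqslant 3$ is also correct, since $\Hilb^n(X)$ and $\cZ$ are then singular and in general not even irreducible.
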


The above result allows us to obtain a smooth component of the moduli space of stable sheaves on $\Hilb^n(S)$ as follows:

\begin{theorem}
	\label{thm:comp2}
	For any positive integer $n$, the K3 surface $S$ is isomorphic to a smooth connected component of a moduli space of stable sheaves on $\Hilb^n(S)$, by viewing $\cI_\cZ$ as a family of coherent sheaves on $\Hilb^n(S)$ parametrized by $S$. 
\end{theorem}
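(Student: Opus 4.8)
The plan is to follow the two-step strategy announced in the introduction, using Proposition \ref{ext} to replace the explicit normal-bundle computation of Lemma \ref{lem:sections} by a purely homological one. First I would record flatness: the universal subscheme $\cZ$ is flat over $S$ by Lemma \ref{lem:HilbNFlat}, so $\cO_\cZ$ is $S$-flat, and flatness of $\cI_\cZ$ over $S$ then follows from the structure sequence $0 \to \cI_\cZ \to \cO_{S\times M} \to \cO_\cZ \to 0$ together with the standard fact that the kernel of a surjection between $S$-flat sheaves is again $S$-flat. Hence $\cI_\cZ$ is a flat family of sheaves on $M = \Hilb^n(S)$ parametrized by $S$.

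Next I would identify the fibers and check stability. Restricting the structure sequence to $\{s\}\times M \cong M$ and using that $\cO_\cZ$ is $S$-flat (so that the relevant $\mathcal{T}or_1$ vanishes), one obtains a short exact sequence $0 \to (\cI_\cZ)_s \to \cO_M \to \cO_{F_s} \to 0$, where $F_s \subseteq M$ is the locus of length-$n$ subschemes containing $s$, of codimension $2$. Thus $(\cI_\cZ)_s \cong \cI_{F_s}$ is the ideal sheaf of a closed subscheme of codimension $2$, in particular a torsion-free sheaf of rank $1$. Since every rank-$1$ torsion-free sheaf on the integral projective variety $M$ is Gieseker-stable with respect to any polarization (any proper nonzero subsheaf is of rank $1$ with torsion quotient, hence has strictly smaller reduced Hilbert polynomial), each fiber $(\cI_\cZ)_s$ is stable. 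This is exactly the step that is delicate in general but trivial here because the rank is $1$.

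The family therefore induces a classifying morphism $f: S \to \cM$, $s \mapsto [(\cI_\cZ)_s]$, to the projective moduli space $\cM$ of Gieseker-stable sheaves on $M$ with the Hilbert polynomial of $\cI_{F_s}$. The two remaining inputs both come from Proposition \ref{ext}, applied to the $\PP^{n-1}$-functor $\Phi_{\cI_\cZ}$ of item i) with $H = [-2]$, which yields $\Ext^\ast_M((\cI_\cZ)_s, (\cI_\cZ)_t) \cong \Ext^\ast_S(\cO_s, \cO_t) \otimes H^\ast(\PP^{n-1}, \CC)$ for all closed points $s, t \in S$. For $s \neq t$ the degree-zero part is $\Hom_S(\cO_s, \cO_t) \otimes H^0 = 0$, so $(\cI_\cZ)_s \not\cong (\cI_\cZ)_t$ and $f$ is injective on closed points. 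For $s = t$, since $H^j(\PP^{n-1}, \CC) = 0$ for odd $j$, the degree-one part collapses to $\Ext^1_S(\cO_s, \cO_s) \otimes H^0 \cong \CC^2$, so that $\dim T_{[(\cI_\cZ)_s]}\cM = \dim \Ext^1_M((\cI_\cZ)_s, (\cI_\cZ)_s) = 2 = \dim S$.

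Finally I would invoke Lemma \ref{image} with $X = S$, $Y = \cM$ and $d = 2$: $f$ is injective on closed points and the tangent space of $\cM$ has dimension $2$ at every point of $f(S)$, so $f$ identifies $S$ with a smooth connected component of $\cM$. The only genuinely substantial step is the $\Ext$-computation underlying both the injectivity and the tangent-space count, and this is supplied wholesale by the $\PP^{n-1}$-functor structure through Proposition \ref{ext}; everything else is formal, and in contrast to the general situation envisaged in the introduction the stability of the fibers presents no obstacle here because they have rank $1$.
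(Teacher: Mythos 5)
Your proposal is correct and follows essentially the same route as the paper: flatness via Lemma \ref{lem:HilbNFlat}, stability of the rank-one fibers, and then Proposition \ref{ext} applied to the $\PP^{n-1}$-functor $\Phi_{\cI_\cZ}$ to get both injectivity on closed points and the tangent-space bound, concluding with Lemma \ref{image}. The only difference is that you spell out details the paper leaves implicit (the kernel-of-a-surjection flatness argument and the vanishing of $H^{\mathrm{odd}}(\PP^{n-1},\CC)$ in the $\Ext^1$ computation), which is harmless.
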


\begin{proof}
	By Lemma \ref{lem:HilbNFlat}, $\cI_\cZ$ can be viewed as a flat family of sheaves on $\Hilb^n(S)$ parametrized by $S$. For each closed point $s \in S$, let $(\cI_\cZ)_s$ be the restriction of $\cI_\cZ$ on the fiber $\{ s \} \times \Hilb^n(S)$. Then $(\cI_\cZ)_s$ is the ideal sheaf of the closed embedding of $\cZ \cap (\{ s \} \times \Hilb^n(S))$ into $\Hilb^n(S)$, hence is a stable sheaf of rank $1$. Therefore we obtain an induced classifying morphism
	\begin{equation}
		\label{eqn:HilbN-class}
		f: S \longrightarrow \cM, \quad s \longmapsto [(\cI_\cZ)_s]
	\end{equation}
	where $\cM$ denotes the moduli space of all stable sheaves on $\Hilb^n(S)$ of the class of $(\cI_\cZ)_s$. For any pair of closed points $s_0, s_1 \in S$, we obtain by \cite[Theorem 3.1]{add16} and Proposition \ref{ext} that
	\begin{equation}
		\label{eqn:HilbN-gen}
		\Ext^\ast_{\Hilb^n(S)} \big( (\cI_\cZ)_{s_0}, (\cI_\cZ)_{s_1} \big) \cong \Ext^\ast_S(\cO_{s_0}, \cO_{s_1}) \otimes H^\ast(\PP^{n-1}, \CC).
	\end{equation}
	In particular, when $s_0 \neq s_1$, it follows from \eqref{eqn:HilbN-gen} that
	$$ \Hom_{\Hilb^n(S)} \big( (\cI_\cZ)_{s_0}, (\cI_\cZ)_{s_1} \big) \cong \Hom_S(\cO_{s_0}, \cO_{s_1}) = 0, $$
	which implies that \eqref{eqn:HilbN-class} is injective on closed points; when $s_0 = s_1 = s$, it follows from \eqref{eqn:HilbN-gen} that
	$$ \Ext^1_{\Hilb^n(S)} \big( (\cI_\cZ)_s, (\cI_\cZ)_s \big) \cong \Ext^1_S(\cO_s, \cO_s), $$
	which implies that
	$$ \dim T_{[(\cI_\cZ)_s]} \cM  = \dim T_sS = 2. $$
	Therefore we conclude by Lemma \ref{image} that the morphism \eqref{eqn:HilbN-class} embeds $S$ as a smooth connected component of $\cM$, as desired.
\end{proof}

\section{Generalized Kummer varieties}

In this section we apply the technique of $\PP^n$-functors to study a component of the moduli space of stable sheaves on generalized Kummer varieties.

Let $A$ be an abelian surface and $\Hilb^{n+1}(A)$ the Hilbert scheme parametrizing closed subschemes of $A$ of length $n+1$. Let the morphism $\Sigma$ be the composition of the Hilbert-Chow morphism and the summation morphism with respect to the group law on $A$, namely
$$ \Sigma: \Hilb^{n+1}(A) \longrightarrow \Sym^{n+1}(A) \longrightarrow A, $$
then the \emph{generalized Kummer variety} is defined to be its zero fiber, namely
$$ \Kum_n(A) := \Sigma^{-1}(0), $$
which is an irreducible holomorphic symplectic manifold. If we denote the restriction of the universal subscheme over $\Hilb^{n+1}(A)$ to $\Kum_n(A)$ by $\cZ$, then we have a commutative diagram
\begin{equation*}
	\begin{tikzcd}
		\cZ \ar[rd, hook] \ar[rdd, "\varphi"', bend right=20] \ar[rrd, "\psi", bend left=15] & & \\
		& A \times \Kum_n(A) \ar[r, "p_2"'] \ar[d, "p_1"] & \Kum_n(A) \\
		& A &
	\end{tikzcd}
\end{equation*}
where $\varphi$ and $\psi$ are the compositions of the embedding and the projections. We denote the ideal sheaf of $\cZ$ in $A \times \Kum_n(A)$ by $\cI_\cZ$. It is clear that $\cI_\cZ$ is flat over $\Kum_n(A)$ since $\psi$ is flat. In fact, $\cI_\cZ$ is also flat over the other factor $A$. 

\begin{lemma}
	\label{lem:KumFlat}
	The universal ideal sheaf $\cI_\cZ$ is flat over $A$ for any $n \geqslant 2$.
\end{lemma}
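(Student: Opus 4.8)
The plan is to reduce the statement to the flatness of the projection $\varphi\colon\cZ\to A$ and then to verify the latter by a dimension count. From the short exact sequence
$$0\lra\cI_\cZ\lra\cO_{A\times\Kum_n(A)}\lra\cO_\cZ\lra 0$$
and the flatness of $\cO_{A\times\Kum_n(A)}$ over $A$ (via the projection $p_1$), a standard $\mathrm{Tor}$ argument shows that $\cI_\cZ$ is flat over $A$ as soon as $\cO_\cZ$ is, i.e. as soon as $\varphi$ is flat. To prove the latter I would invoke the miracle flatness criterion already used in Lemma \ref{lem:fibers}: since $A$ is smooth and $\cZ$ is Cohen--Macaulay (being finite flat of degree $n+1$ over the smooth variety $\Kum_n(A)$ through $\psi$) of pure dimension $2n$, it suffices to show that every nonempty fibre of $\varphi$ has dimension exactly $2n-2=\dim\cZ-\dim A$.

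For a closed point $a\in A$, the map $\psi$ identifies $\varphi^{-1}(a)$ with the incidence locus $I_a:=\{[\xi]\in\Kum_n(A)\mid a\in\xi\}$, so the task becomes the purely dimension-theoretic statement $\dim I_a=2n-2$. Since $\cZ$ has pure dimension $2n$ and $\varphi$ is dominant, the lower bound $\dim I_a\geq 2n-2$ is automatic, and the real point is the upper bound $\dim I_a\leq 2n-2$. I would obtain this by stratifying $I_a$ according to the length $\ell\geq 1$ of the component of $\xi$ supported at $a$. For $1\leq\ell\leq n$, such a $\xi$ decomposes as a length-$\ell$ punctual scheme at $a$ together with a length-$(n+1-\ell)$ scheme away from $a$ whose summation is forced to equal $-\ell a$; the first factor moves in the punctual Hilbert scheme of dimension $\ell-1$, while the second moves in a fibre of the smooth surjective summation morphism $\Hilb^{n+1-\ell}(A)\to A$, of dimension $2(n-\ell)$. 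This stratum therefore has dimension $(\ell-1)+2(n-\ell)=2n-\ell-1\leq 2n-2$, with equality exactly when $\ell=1$.

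The remaining stratum, where $\xi$ is entirely supported at $a$ (so $\ell=n+1$), is the delicate one and is where the hypothesis $n\geq 2$ enters. Such a $\xi$ has summation $(n+1)a$, so the stratum is nonempty only when $a\in A[n+1]$, in which case it is the punctual Hilbert scheme $\Hilb^{n+1}_a(A)$ of dimension $n$. The bound we need, $n\leq 2n-2$, holds precisely because $n\geq 2$; for $n=1$ it would fail, consistent with the hypothesis of the lemma. Combining the strata gives $\dim I_a=\max(2n-2,\,n)=2n-2$ for every $a$, which is the equidimensionality needed to conclude flatness of $\varphi$, and hence of $\cI_\cZ$, over $A$. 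I expect this punctual stratum to be the main obstacle: away from it the count is a transparent product of a punctual factor and a Kummer-type fibre of the summation map, but controlling the fully non-reduced configurations and checking that they never force a jump in fibre dimension is exactly what pins down the role of the assumption $n\geq 2$.
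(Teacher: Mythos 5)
Your proposal is correct and follows essentially the same route as the paper: reduce to flatness of $\varphi$ via miracle flatness ($A$ smooth, $\cZ$ Cohen--Macaulay through $\psi$), then bound the fibre dimension by stratifying according to the behaviour of $\xi$ at $a$, with the fully punctual stratum of dimension $n$ being exactly where $n\geqslant 2$ is needed. The only difference is cosmetic: you stratify by the length $\ell$ at $a$ and bound the residual part by the fibre dimension $2(n-\ell)$ of the summation morphism $\Hilb^{n+1-\ell}(A)\to A$, whereas the paper refines this to the full partition type and counts configurations directly, both yielding the same bound $2n-2$.
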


\begin{proof}
	It suffices to show that the morphism $\varphi: \cZ \to A$ is flat. First of all, we claim that the dimension of the fiber $\varphi^{-1}(a_0)$ is $2n-2$ for any closed point $a_0 \in A$.
	
	On the one hand, since $A$ is smooth, the closed point $a_0 \in A$ is locally defined by two equations. Therefore locally near any point $x \in \varphi^{-1}(a_0)$, the fiber $\varphi^{-1}(a_0)$ is also defined by two equations, hence is of codimension at most $2$ by Krull's height theorem; see \cite[\S 12.I, Theorem 18]{Mat80}. In other words, we have
	\begin{equation}
		\label{eqn:geq}
		\dim\varphi^{-1}(a_0) \geqslant 2n-2.
	\end{equation}
	
	On the other hand, we have
	\begin{align*}
		\varphi^{-1}(a_0) &= \{ (a_0, \xi) \in A \times \Kum_n(A) \mid a_0 \in \Supp(\xi) \}\\
		&\cong \{ \xi \in \Kum_n(A) \mid a_0 \in \Supp(\xi) \}.
	\end{align*}
	For any such $\xi$, we can write the associated $0$-cycle $[\xi]$ as
	$$ [\xi] = \sum_{i=0}^k n_ia_i, $$
	where $a_0, a_1, \cdots, a_k$ are pairwise distinct closed points, and $n_0, n_1, \cdots, n_k$ are the multiplicities. We further require $n_1 \geqslant \cdots \geqslant n_k >0$ if $k>0$. It is clear that
	\begin{equation}
		\label{eqn:config1}
		\sum_{i=0}^k n_i = n+1
	\end{equation}
	which in particular implies $k \leqslant n$, and
	\begin{equation}
		\label{eqn:config2}
		\sum_{i=0}^k n_ia_i = 0 \in A
	\end{equation}
	which utilizes the group law on $A$. We call the partition of $n$
	$$ \vec{n} = (n_0, n_1, \cdots, n_k) $$
	the type of $\xi$. Let $\varphi^{-1}(a_0, \vec{n})$ be the set of all closed points $\xi \in \varphi^{-1}(a_0)$ of type $\vec{n}$, then we have a decomposition
	\begin{equation}
		\label{eqn:decomp_varphi}
		\varphi^{-1}(a_0)=\bigsqcup\limits_{\vec{n}} \varphi^{-1}(a_0,\vec{n}).
	\end{equation}
	We then compute the dimension of $\varphi^{-1}(a_0,\vec{n})$ for each $\vec{n}$. 
	
	When $k=0$, we have $\vec{n} = (n+1)$, and for any $\xi \in \varphi^{-1}(a_0, \vec{n})$ we have $[\xi] = (n+1)a_0$. It is clear that such $\varphi^{-1}(a_0, \vec{n})$ is non-empty if and only if $a_0 \in A$ is an $(n+1)$-torsion point. When non-empty, $\varphi^{-1}(a_0, \vec{n})$ is the punctual Hilbert scheme $\Hilb^{n+1}_{a_0}(A)$ which parametrizes length $(n+1)$ subschemes of $A$ having support at only one point $a_0$. By \cite[Corollary 1]{iar}, we have
	\begin{equation}
		\label{eqn:ausnahme}
		\dim \varphi^{-1}(a_0, \vec{n}) = n \leqslant 2n-2
	\end{equation}
	for each $(n+1)$-torsion point $a_0$ and integer $n \geqslant 2$.
	
	When $k \geqslant 1$, every $\xi \in \varphi^{-1}(a_0, \vec{n})$ corresponds to a configuration $\{ a_1, \cdots, a_k \}$ of pairwise distinct points satisfying \eqref{eqn:config2}. We can choose the first $(k-1)$ points freely, then $a_k$ is uniquely determined up to $n_k$-torsion. Hence there is a $2(k-1)$-dimensional family of configurations $\{ a_1, \cdots, a_k \}$. For any fixed configuration, the possible scheme structures on $\xi$ is classified by the product of punctual Hilbert schemes $\Hilb^{n_0}_{a_0}(A) \times \cdots \times \Hilb^{n_k}_{a_k}(A)$. By \cite[Corollary 1]{iar} and \eqref{eqn:config1}, we obtain
	\begin{align*}
		\dim \varphi^{-1}(a_0, \vec{n}) &= 2(k-1) + \sum_{i=0}^k (n_i-1) \\
		&= 2(k-1) + (n+1) - (k+1) \\
		&= n+k-2 \leqslant 2n-2.
	\end{align*}
	Combining the two cases, we have by \eqref{eqn:decomp_varphi} that
	\begin{equation}
		\label{eqn:leq}
		\dim \varphi^{-1}(a_0) \leqslant 2n-2.
	\end{equation}
	It then follows from \eqref{eqn:geq} and \eqref{eqn:leq} that all fibers $\varphi^{-1}(a_0)$ are equidimensional of dimension $2n-2$.
	
	Moreover, since $\psi$ is a surjective flat morphism and $\Kum_n(A)$ is smooth of dimension $2n$, we know $\cZ$ is Cohen-Macaulay of dimension $2n$ by \cite[Corollary 18.17]{Eisen95}. Since $A$ is smooth, we conclude that $\varphi: \cZ \to A$ is flat by \cite[Theorem 23.1, Corollary]{Mat86}, which implies that its ideal sheaf $\cI_\cZ$ is flat over $A$, as desired.
\end{proof}

\begin{remark}
	It is easy to see that the statement of Lemma \ref{lem:KumFlat} fails for $n=1$, due to the failure of \eqref{eqn:ausnahme}. In fact, in such a case, $\varphi^{-1}(a_0)$ is either a smooth rational curve or a single point, depending on whether $a_0$ is a $2$-torsion point of $A$.
\end{remark}

The above result allows us to obtain a smooth component of the moduli space of stable sheaves on $\Kum_n(A)$ as follows:

\begin{theorem}
	\label{thm:comp3}
	For any $n \geqslant 2$, the abelian surface $A$ is isomorphic to a smooth connected component of a moduli space of stable sheaves on $\Kum_n(A)$, by viewing $\cI_\cZ$ as a family of coherent sheaves on $\Kum_n(A)$ parametrized by $A$. 
\end{theorem}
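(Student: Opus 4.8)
The plan is to follow the same strategy as in the proof of Theorem \ref{thm:comp2}, replacing the Hilbert scheme $\Hilb^n(S)$ by the generalized Kummer variety $\Kum_n(A)$ and using the corresponding $\PP^{n-1}$-functor from item ii) in the list above. First I would invoke Lemma \ref{lem:KumFlat} to view $\cI_\cZ$ as a flat family of sheaves on $\Kum_n(A)$ parametrized by $A$. For each closed point $a \in A$, the fiber $(\cI_\cZ)_a$ is the ideal sheaf of the closed subscheme $\cZ \cap (\{a\} \times \Kum_n(A))$ of $\Kum_n(A)$, hence a torsion-free sheaf of rank $1$, which is automatically stable. This yields a classifying morphism $f: A \to \cM$, $a \mapsto [(\cI_\cZ)_a]$, where $\cM$ denotes the moduli space of stable sheaves on $\Kum_n(A)$ of the class of $(\cI_\cZ)_a$.

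Next, since $\Phi_{\cI_\cZ}: D^b(A) \to D^b(\Kum_n(A))$ is a $\PP^{n-1}$-functor with associated autoequivalence $H=[-2]$ for $n \geqslant 2$ by \cite[Theorem 4.1]{meac}, I would apply Proposition \ref{ext} to obtain, for any pair of closed points $a_0, a_1 \in A$,
$$ \Ext^\ast_{\Kum_n(A)} \big( (\cI_\cZ)_{a_0}, (\cI_\cZ)_{a_1} \big) \cong \Ext^\ast_A(\cO_{a_0}, \cO_{a_1}) \otimes H^\ast(\PP^{n-1}, \CC). $$
Taking the degree-zero part, when $a_0 \neq a_1$ the right-hand side vanishes, so $\Hom_{\Kum_n(A)}\big((\cI_\cZ)_{a_0}, (\cI_\cZ)_{a_1}\big) = 0$ and $f$ is injective on closed points; when $a_0 = a_1 = a$, the degree-one part gives $\Ext^1_{\Kum_n(A)}\big((\cI_\cZ)_a, (\cI_\cZ)_a\big) \cong \Ext^1_A(\cO_a, \cO_a)$, which yields $\dim T_{[(\cI_\cZ)_a]}\cM = \dim T_a A = 2$. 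Finally, applying Lemma \ref{image} with $X = A$ of dimension $d = 2$ and $Y = \cM$ lets me conclude that $f$ identifies $A$ with a smooth connected component of $\cM$.

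Since all the genuinely substantial work for this case has already been carried out, I expect no serious obstacle in assembling the argument: the flatness of $\cI_\cZ$ over $A$ (the delicate dimension-counting of the fibers of $\varphi$, which fails for $n=1$) is settled by Lemma \ref{lem:KumFlat}, and the $\PP^{n-1}$-functor structure is supplied by \cite{meac}. The only point requiring a moment of care is to confirm that the hypotheses of Proposition \ref{ext} are met, namely that $\cI_\cZ$ is a coherent sheaf on $A \times \Kum_n(A)$ flat over $A$ whose associated integral functor is a $\PP^{n-1}$-functor with $H = [-2]$; both conditions hold precisely by the two inputs just cited. Thus the proof reduces to a direct transcription of the argument for Theorem \ref{thm:comp2}.
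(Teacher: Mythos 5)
Your proposal is correct and follows essentially the same route as the paper: flatness via Lemma \ref{lem:KumFlat}, stability of the rank-one fibers, the $\Ext$-computation via \cite[Theorem 4.1]{meac} and Proposition \ref{ext}, and the conclusion via Lemma \ref{image} exactly as in Theorem \ref{thm:comp2}.
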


\begin{proof}
	By Lemma \ref{lem:KumFlat}, $\cI_\cZ$ can be viewed as a flat family of sheaves on $\Kum_n(A)$ parametrized by $A$. For each closed point $a_0 \in A$, let $(\cI_\cZ)_{a_0}$ be the restriction of $\cI_\cZ$ on the fiber $\{ a_0 \} \times \Kum_n(A)$. Then $(\cI_\cZ)_{a_0}$ is the ideal sheaf of the closed embedding of $\cZ \cap (\{ a_0 \} \times \Kum_n(A))$ into $\Kum_n(A)$, hence is a stable sheaf of rank $1$. Therefore we obtain an induced classifying morphism
	\begin{equation}
		\label{eqn:Kum-class}
		f: A \longrightarrow \cM, \quad a_0 \longmapsto [(\cI_\cZ)_{a_0}]
	\end{equation}
	where $\cM$ denotes the moduli space of all stable sheaves on $\Kum_n(A)$ of the class of $(\cI_\cZ)_{a_0}$. For any pair of closed points $a_0, a_1 \in A$, we obtain by \cite[Theorem 4.1]{meac} and Proposition \ref{ext} that
	$$ \Ext^\ast_{\Kum_n(A)} \big( (\cI_\cZ)_{a_0}, (\cI_\cZ)_{a_1} \big) \cong \Ext^\ast_A(\cO_{a_0}, \cO_{a_1}) \otimes H^\ast(\PP^{n-1}, \CC). $$
	From here, a similar argument as in Theorem \ref{thm:comp2} shows that the morphism \eqref{eqn:Kum-class} embeds $A$ as a smooth connected component of $\cM$.
\end{proof}

\section{Moduli spaces of pure sheaves on K3 surfaces}

In this section we extend our discussion to the fine moduli spaces of stable sheaves of pure dimension $1$ on a K3 surface of Picard number $1$.

Let $S$ be a K3 surface with $\Pic(S) = \ZZ H$ where $H$ is an ample line bundle of degree $2g-2$. Let $\PP(V) \cong \PP^g$ be the complete linear system of $H$ where $V = H^0(S, H)$. Since $S$ has Picard number $1$, every curve $C$ in the linear system $\PP(V)$ is reduced and irreducible of genus $g$ with planar singularities, hence its compactified Jacobian $\overline{\Jac}^d(C)$ is reduced and irreducible of dimension $g$ by \cite[Theorem (9)]{AIK77}. We denote by $\cC$ the universal curve of the linear system $\PP(V)$. Therefore $\cC$ is a closed subscheme of $S \times \PP(V)$ and admits projections to $S$ and $\PP(V)$. All fibers of the first projection $\tau: \cC \to S$ are linear subsystems of $\PP(V)$ of codimension $1$.

Let $M$ be the moduli space of stable sheaves on $S$ with Mukai vector
$$ v = (0, H, d+1-g). $$
We assume $\gcd(2g-2, d+1-g) = 1$, then $M$ is a smooth fine moduli space of stable torsion sheaves of pure dimension $1$, hence admits a universal family $\cU$. In fact, $M$ is an irreducible holomorphic symplectic manifold. The corresponding support morphism
$$ \eta: M \longrightarrow \PP(V) $$
sends a stable sheaf to its support curve. 

Alternatively, $M$ can also be interpreted as the relative compactified Jacobian $\overline{\Jac}^d(\cC/\PP(V))$ of the family $\cC \to \PP(V)$. Hence the support of the universal family $\cU$ is given by
$$ T := \Supp(\cU) = \cC \times_{\PP(V)} M. $$
It is more convenient to consider the universal family as a sheaf on $T$, so we define
$$ \cE := \iota^\ast\cU $$
where $\iota: T \hookrightarrow S \times M$ is the closed embedding. Then we have $\cU \cong \iota_\ast\cE$ by \cite[Remark 7.35]{Wed10}.

The relation among the various spaces and morphisms introduced above can be summarised in the following commutative diagram
\begin{equation}
	\label{eqn:PureMain}
	\begin{tikzcd}
		T \ar[dd, bend right=60, "\pi"'] \ar[r, "\varphi"'] \ar[rr, bend left=20, "\psi"] \ar[d, hook, "\iota"'] & \cC \ar[r, "\tau"'] \ar[d, hook] & S \ar[d, equal] \\
		S \times M \ar[r] \ar[d] & S \times \PP(V) \ar[r] \ar[d] & S \\
		M \ar[r, "\eta"] & \PP(V) &
	\end{tikzcd}
\end{equation}
where both squares on the left are cartesian. 

Moreover, for any closed point $s \in S$, we denote the fiber $\psi^{-1}(s)$ by $T_s$, with the corresponding closed embedding $i_s: T_s \hookrightarrow T$. We also denote the pullback of $\cE$ to the fiber $T_s$ by $\cE_s$, and the pullback of $\cU$ to the fiber $\{ s \} \times M$ by $\cU_s$.

The following properties will be used later:


\begin{lemma}
	\label{lem:Tproperty}
	Both $T$ and $T_s$ (for each closed point $s \in S$) are integral and Gorenstein.
\end{lemma}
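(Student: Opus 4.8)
The plan is to obtain the Gorenstein property for free by realising both $T$ and each $T_s$ as effective Cartier divisors in smooth ambient varieties, and to deduce integrality from a flat fibration over an integral base whose fibres are integral compactified Jacobians. The single technical input underlying everything is the flatness of the support morphism $\eta\colon M\to\PP(V)$, which I would establish by miracle flatness \cite[Theorem 23.1, Corollary]{Mat86}: the source $M$ is smooth (hence Cohen--Macaulay) of dimension $2g$, the target $\PP(V)\cong\PP^g$ is regular, and every fibre $\eta^{-1}([C])=\overline{\Jac}^d(C)$ is equidimensional of dimension $g$ by \cite[Theorem (9)]{AIK77}.

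For the Gorenstein property, recall that $\cC\subseteq S\times\PP(V)$ is an effective Cartier divisor, namely the zero locus of the tautological section cutting out the universal curve. Scheme-theoretically $T=\cC\times_{\PP(V)}M$ coincides with the pullback $(\id_S\times\eta)^{-1}(\cC)$ along $\id_S\times\eta\colon S\times M\to S\times\PP(V)$; since the pullback of a Cartier divisor is Cartier and $\dim T=2g+1<\dim(S\times M)$, the pulled-back local equation is a nonzerodivisor, so $T$ is an effective Cartier divisor in the smooth variety $S\times M$. A hypersurface in a regular scheme is a local complete intersection, hence $T$ is Gorenstein (and in particular Cohen--Macaulay). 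For $T_s$ I would write $H_s\cong\PP^{g-1}$ for the hyperplane of curves through $s$ and note that $T_s=\psi^{-1}(s)=\eta^{-1}(H_s)$; as $H_s$ is a Cartier divisor in $\PP(V)$, $\eta$ is surjective, and $M$ is smooth, $T_s$ is likewise an effective Cartier divisor in $M$, hence Gorenstein.

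Integrality is the remaining point. Base-changing the flat morphism $\eta$ yields flat morphisms $\varphi\colon T\to\cC$ and $T_s\to H_s$ over the integral smooth bases $\cC$ and $H_s\cong\PP^{g-1}$, with fibres the integral compactified Jacobians $\overline{\Jac}^d(C)$. The generic fibre is integral because the generic member of $\PP(V)$ --- and, by the Picard-rank-one hypothesis, every member of $H_s$ --- is an integral curve. Irreducibility then follows from going-down for flat morphisms: every point generises into the irreducible generic fibre, so the latter's generic point is the unique generic point of the total space. Reducedness follows from Serre's criterion $R_0+S_1$: the space is Cohen--Macaulay by the previous step, hence $S_1$ with no embedded points, while its local ring at the unique generic point agrees with the function field of the integral generic fibre and is therefore a field, giving $R_0$. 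Thus $T$ and every $T_s$ are integral.

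I expect the main obstacle to be the flatness of $\eta$, which hinges entirely on the equidimensionality of all compactified Jacobian fibres; this would fail without the planarity and integrality of every curve in $|H|$, so the Picard-rank-one hypothesis is indispensable here, and again in ensuring that the general member of the restricted system $H_s$ stays integral. A secondary point to verify carefully is the scheme-theoretic identity $T=(\id_S\times\eta)^{-1}(\cC)$, which is what guarantees that the Cartier scheme structure used in the Gorenstein argument agrees with the fibre-product structure defining $T$.
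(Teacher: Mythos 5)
Your proposal is correct and rests on the same technical backbone as the paper's proof: miracle flatness of the support map $\eta\colon M\to\PP(V)$ via \cite[Theorem 23.1, Corollary]{Mat86} and \cite[Theorem (9)]{AIK77}, flat base change to $\varphi\colon T\to\cC$ and $\varphi_s\colon T_s\to\PP^{g-1}$, and integrality of all closed fibres. Where you genuinely diverge is in the derivation of the Gorenstein property: the paper first proves $T$ (resp.\ $T_s$) is \emph{integral} of codimension $1$, concludes it is a hypersurface in the smooth ambient $S\times M$ (resp.\ $M$), and then invokes \cite[Corollary 21.19]{Eisen95}; you instead exhibit the Cartier-divisor structure directly by pulling back the divisor $\cC\subseteq S\times\PP(V)$ along $\id_S\times\eta$ (resp.\ the hyperplane $H_s$ along $\eta$), which decouples the Gorenstein property from integrality and hands you Cohen--Macaulayness up front --- a structural gain you then exploit for the $S_1$ half of Serre's criterion. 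Your integrality argument unwinds by hand (going-down plus $R_0+S_1$) what the paper outsources to \cite[\href{https://stacks.math.columbia.edu/tag/0BCM}{Lemma 0BCM}]{stacks-project}. The one step you state too loosely is the integrality of the \emph{generic} fibre: saying that the generic member of $\PP(V)$ is an integral curve addresses very general closed points, not the fibre over the generic (non-closed) point, whose compactified Jacobian lives over the function field of $\PP(V)$. The paper closes exactly this gap by citing \cite[Theorem 14.44]{Wed10} to pass from geometric integrality of all closed fibres to that of the generic fibre; you should insert the same (or an equivalent spreading-out) argument.
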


\begin{proof}
    We first note that $\cC$, being a $\PP^{g-1}$-bundle bundle over $S$, is smooth and irreducible of dimension $g+1$. Consequently $\cC$ is integral. Moreover, since both $M$ and $\PP(V)$ are smooth, and all closed fibers of $\eta$ are compactified Jacobians, which are integral of dimension $g$, the morphism $\eta$ is flat by \cite[Theorem 23.1, Corollary]{Mat86}. It follows that $\varphi$ is also flat, and every closed fiber of $\varphi$ is integral. Thus \cite[Theorem 14.44]{Wed10} implies that the generic fiber of $\varphi$ is also integral. Therefore $T$ is integral of dimension $2g+1$ by \cite[\href{https://stacks.math.columbia.edu/tag/0BCM}{Lemma 0BCM}]{stacks-project}. This means $T$ is a hypersurface in the smooth variety $S \times M$, hence $T$ is Gorenstein by \cite[Corollary 21.19]{Eisen95}.
	
	For any closed point $s \in S$, the restriction of $\varphi$ to the fibers over $s$ is given by
	$$ \varphi_s: T_s \longrightarrow \PP^{g-1}. $$
	The above properties of $\varphi$ imply that $\varphi_s$ is also flat, and that every closed fiber of $\varphi_s$ is integral. It follows for the same reason as above that $T_s$ is integral of dimension $2g-1$, hence is a hypersurface in the smooth variety $M$, which implies that $T_s$ is also Gorenstein.
\end{proof}

	

Now we turn to properties of the universal sheaf:

\begin{lemma}
	\label{lem:Eproperty}
	The sheaf $\cE$ on $T$ is flat over $S$, and the sheaf $\cE_s$ on $T_s$ is stable for each closed point $s \in S$.
\end{lemma}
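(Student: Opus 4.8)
The plan is to derive both statements from a single structural fact, namely that $\cE$ is a \emph{maximal Cohen--Macaulay} sheaf on $T$. Once this is established, flatness over $S$ and torsion-freeness of the fibres $\cE_s$ will both follow from the way maximal Cohen--Macaulay sheaves behave under restriction to a fibre of the expected dimension over a regular base. I want to emphasise that this indirect route seems unavoidable: one cannot simply factor $\psi$ as $T \xrightarrow{\varphi} \cC \xrightarrow{\tau} S$ and compose flat morphisms, because $\cE$ fails to be flat over $\cC$. Indeed, over a point $(s,[C])$ with $s$ a singular point of $C$, the fibre $F|_s$ of a sheaf $F$ that is not locally free at $s$ jumps in dimension, so $\cE$ is not flat along the relative singular locus of $\cC \to S$. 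Producing and exploiting the Cohen--Macaulay property of $\cE$ is therefore the heart of the argument, and the step I expect to be the main obstacle.

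First I would observe that $\cE$ is flat over $M$ with Cohen--Macaulay fibres. Flatness over $M$ holds because $\cU = \iota_\ast \cE$ is the universal family of the moduli problem, hence flat over $M$, and pushforward along the closed immersion $\iota$ does not affect flatness over $M$. The fibre of $\cE$ over a point $[F] \in M$ is the stable sheaf $F$ itself, which is pure of dimension $1$ and therefore Cohen--Macaulay. Since $M$ is smooth of dimension $2g$, the depth additivity for modules flat over a regular base \cite[Corollary 18.17]{Eisen95} then yields that $\cE$ is Cohen--Macaulay of dimension $2g+1$ on $T$. Now I turn to $\psi \colon T \to S$. The base $S$ is regular of dimension $2$, and by Lemma \ref{lem:Tproperty} every fibre $T_s = \Supp(\cE_s)$ is equidimensional of dimension $2g-1 = (2g+1)-2$. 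Because $\cE$ is Cohen--Macaulay, $S$ is regular, and the fibres of $\psi$ have exactly this expected dimension, the module analogue of miracle flatness \cite[Theorem 23.1, Corollary]{Mat86} shows that $\cE$ is flat over $S$; concretely, a regular system of parameters $(f,g)$ of $\cO_{S,s}$ cuts the dimension of $\cE$ down by exactly $2$ and hence forms an $\cE$-regular sequence.

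Finally, for the stability of $\cE_s$ I would reuse the regular sequence $(f,g)$ just produced. As it is $\cE$-regular and $\cE$ is Cohen--Macaulay of dimension $2g+1$, the fibre $\cE_s \cong \cE/(f,g)\cE$ is Cohen--Macaulay of dimension $2g-1 = \dim T_s$, hence a maximal Cohen--Macaulay sheaf on $T_s$, and in particular pure of dimension $2g-1$. Because $T_s$ is integral by Lemma \ref{lem:Tproperty}, purity means that $\cE_s$ is torsion-free. To see that $\cE_s$ has rank $1$, I note that the generic point of $T_s$ is represented by a triple $(s,[C],[F])$ in which $C$ is a general curve of $|H|$ through $s$, which is smooth at $s$, and $F$ is a line bundle of degree $d$ on $C$; there $\cE$ is locally free of rank $1$, and hence so is $\cE_s$. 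Since a rank-$1$ torsion-free sheaf on an integral projective variety is automatically Gieseker stable, we conclude that $\cE_s$ is stable, equivalently that $\cU_s = (\iota_s)_\ast \cE_s$ is a stable sheaf on $M$.
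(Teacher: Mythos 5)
Your argument is correct, but it reaches both conclusions by a genuinely different route than the paper. For the central fact that $\cE$ is maximal Cohen--Macaulay on $T$, the paper invokes the vanishing $\sExt^i_T(\cE,\cO_T)=0$ for $i>0$ from Burban--Kreu{\ss}ler together with the Gorenstein property of $T$, whereas you deduce it from flatness of $\cE$ over $M$, Cohen--Macaulayness of the pure one-dimensional fibres, and depth additivity over the regular base $M$. For flatness over $S$ the paper passes through the finite Tor-dimension of $i_s$, Arinkin's lemma $Li_s^\ast\cE=i_s^\ast\cE$ and Huybrechts' criterion, whereas you apply the module form of miracle flatness (Cohen--Macaulay module, regular base, fibres of the expected dimension $2g-1$ supplied by Lemma \ref{lem:Tproperty}). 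For torsion-freeness of $\cE_s$ the paper uses Arinkin again plus the Herzog--Kunz reflexivity criterion on the Gorenstein scheme $T_s$, whereas you cut by the regular sequence $(f,g)$ to see that $\cE_s$ is maximal Cohen--Macaulay, hence pure, hence torsion-free on the integral $T_s$. Your route is more elementary and self-contained, needs less of Lemma \ref{lem:Tproperty} (only integrality and dimension, not the Gorenstein property), and makes the rank-one computation explicit where the paper leaves it implicit; what it costs is reliance on the module versions of \cite[Corollary 18.17]{Eisen95} and \cite[Theorem 23.1]{Mat86}, which are standard but not quite what those references state verbatim. Two minor soft spots, neither fatal: your rank-one argument at the generic point of $T_s$ silently uses that the general curve in $|H|$ through $s$ is smooth at $s$ (true on a K3 of Picard rank one, but worth a sentence), and your opening claim that $\cE$ fails to be flat over $\cC$ is not actually established by the fibre-dimension remark as stated --- fortunately nothing in the proof depends on it.
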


\begin{proof}
	We observe that the morphism $\cC \to \PP(V)$ (the composition of the morphisms in the middle column of \eqref{eqn:Pure-class}) is projective, flat and Gorenstein of pure dimension $1$. After the base change along $\eta$, the morphism $\pi: T \to M$ (the composition of the morphisms in the left column of \eqref{eqn:Pure-class}) is also projective, flat and Gorenstein of pure dimension $1$. Furthermore $\cE$ is flat over $M$, and for any point $m \in M$, the restriction of $\cE$ to the fiber $\pi^{-1}(m)$ is torsion free. It follows by \cite[Corollary 2.2]{burb06} that
	$$ \sExt_T^i(\cE, \cO_T) = 0 $$
	for every $i>0$. Since $T$ is irreducible and Gorenstein, this implies that $\cE$ is a maximal Cohen-Macaulay sheaf on $T$.
	
	We have seen that $\varphi$ and $\tau$ are both flat morphisms, hence $\psi$ is also a flat morphism. The closed embedding $\{s\} \hookrightarrow S$ is a morphism of finite Tor dimension. After a flat base change along $\psi$, we see that $i_s: T_s \hookrightarrow T$ is also of finite Tor dimension. Since $T$ is irreducible and Gorenstein by Lemma \ref{lem:Tproperty}, \cite[Lemma 2.3 (1)]{Arinkin13} implies
	$$ L i_s^\ast \cE = i_s^\ast \cE $$
	for every closed point $s \in S$, where $Li_s^\ast$ is the derived pullback functor. It follows by \cite[Lemma 3.31]{Huy06} that $\cE$ is flat over $S$.
	
	By Lemma \ref{lem:Tproperty} we also know $T_s$ is Gorenstein, hence is in particular Cohen-Macaulay. By \cite[Lemma 2.3 (2)]{Arinkin13}, $\cE_s$ is also maximal Cohen-Macaulay, which by \cite[Satz 6.1, a) $\Rightarrow$ d)]{herz71} implies that $\cE_s$ is reflexive, and hence in particular torsion free on $T_s$. Therefore $\cE_s$ is stable since it is of rank $1$.
\end{proof}

The above result allows us to obtain again a smooth component of the moduli space of stable sheaves on $M$ as follows:

\begin{theorem}
	\label{thm:comp4}
	Under the assumptions in the present section, the K3 surface $S$ is isomorphic to a smooth connected component of a moduli space of stable sheaves on $M$, by viewing $\cU$ as a family of coherent sheaves on $M$ parametrized by $S$.
\end{theorem}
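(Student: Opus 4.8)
The plan is to follow the same template as the proofs of Theorems \ref{thm:comp2} and \ref{thm:comp3}, with the universal family $\cU$ playing the role of the universal ideal sheaf and the $\PP^{g-1}$-functor structure recorded in item iii) above (i.e. \cite[Theorem A]{add16-2}) supplying the input for Proposition \ref{ext}. The genuinely technical part of this section — flatness of $\cU$ over $S$ and stability of its fibers — has already been isolated in Lemma \ref{lem:Eproperty}, so what remains is to assemble these facts, invoke Proposition \ref{ext}, and appeal to Lemma \ref{image}.

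First I would set up the classifying morphism. Since $\cU \cong \iota_\ast \cE$ with $\cE$ flat over $S$ by Lemma \ref{lem:Eproperty}, and $\iota$ is a closed immersion compatible with the projections to $S$, the sheaf $\cU$ is flat over $S$ and may be viewed as a flat family of sheaves on $M$ parametrized by $S$. For each closed point $s \in S$ the fiber $\cU_s$ is the pushforward of $\cE_s$ along the closed immersion $T_s \hookrightarrow \{s\} \times M \cong M$. As $\cE_s$ is a rank-one torsion-free, hence stable, sheaf on the integral scheme $T_s$ (Lemma \ref{lem:Eproperty}), its pushforward $\cU_s$ is a pure sheaf on $M$ supported on $T_s$, whose Gieseker stability with respect to a polarization on $M$ is equivalent to that of $\cE_s$ with respect to the restricted polarization; thus $\cU_s$ is stable. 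Since $S$ is connected and $\cU$ is flat over $S$, all the $\cU_s$ share the same class, so we obtain a classifying morphism
\begin{equation*}
	f: S \longrightarrow \cM, \quad s \longmapsto [\cU_s],
\end{equation*}
where $\cM$ denotes the moduli space of stable sheaves on $M$ of the class of $\cU_s$.

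Next I would compute the relevant $\Ext$ groups. Because $\cU$ is flat over $S$ and $\Phi_\cU : D^b(S) \to D^b(M)$ is a $\PP^{g-1}$-functor with associated autoequivalence $H = [-2]$, Proposition \ref{ext} applies and yields, for any closed points $s_0, s_1 \in S$,
\begin{equation*}
	\Ext^\ast_M(\cU_{s_0}, \cU_{s_1}) \cong \Ext^\ast_S(\cO_{s_0}, \cO_{s_1}) \otimes H^\ast(\PP^{g-1}, \CC).
\end{equation*}
For $s_0 \neq s_1$ the degree-zero part gives $\Hom_M(\cU_{s_0}, \cU_{s_1}) \cong \Hom_S(\cO_{s_0}, \cO_{s_1}) = 0$, so $f$ is injective on closed points. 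For $s_0 = s_1 = s$, since $H^\ast(\PP^{g-1}, \CC)$ is concentrated in even degrees, the degree-one part receives a contribution only from $\Ext^1_S(\cO_s, \cO_s) \otimes H^0(\PP^{g-1}, \CC)$, so that $\dim T_{[\cU_s]}\cM = \dim \Ext^1_M(\cU_s, \cU_s) = \dim \Ext^1_S(\cO_s, \cO_s) = \dim T_s S = 2$. Applying Lemma \ref{image} with $d = 2$ then identifies $S$ with a smooth connected component of $\cM$.

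The genuine difficulty of this section lies entirely in Lemma \ref{lem:Eproperty}, namely establishing that $\cE$ is flat over $S$ and that its fibers are stable, which rested on the integrality and Gorenstein properties of $T$ and $T_s$ from Lemma \ref{lem:Tproperty} together with the maximal Cohen--Macaulay analysis. Within the theorem itself the only point that needs care is the transfer of flatness and stability across the closed immersion $\iota$ — equivalently, reading $\cU_s$ as the pushforward of $\cE_s$ and observing that pushforward along a finite closed immersion preserves both. Once this is in place, the remaining argument is formally identical to the Hilbert scheme case of Theorem \ref{thm:comp2}.
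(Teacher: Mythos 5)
Your proposal is correct and follows essentially the same route as the paper: flatness and stability of the fibers are imported from Lemma \ref{lem:Eproperty}, the $\Ext$ computation comes from \cite[Theorem A]{add16-2} via Proposition \ref{ext}, and Lemma \ref{image} closes the argument exactly as in Theorem \ref{thm:comp2}. The only difference is that you spell out the injectivity and tangent-space steps that the paper delegates to ``a similar argument as in Theorem \ref{thm:comp2},'' which is a faithful expansion rather than a deviation.
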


\begin{proof}
	By Lemma \ref{lem:Eproperty}, we know that the sheaf $\cU = \iota_\ast \cE$ is also flat over $S$, and the fiber $\cU_s$ is a stable sheaf on $M$ of pure dimension $2g-1$ for each closed point $s \in S$. Therefore $\cU$ is a flat family of stable sheaves on $M$ parametrized by $S$, with an induced classifying morphism given by
	\begin{equation}
		\label{eqn:Pure-class}
		f: S \longrightarrow \cM, \quad s \longmapsto [\cU_s]
	\end{equation}
	where $\cM$ is the moduli space of all stable sheaves on $M$ of the class of $\cU_s$. For any pair of closed points $s_0, s_1 \in S$, we obtain by \cite[Theorem A]{add16-2} and Proposition \ref{ext} that
	$$ \Ext^\ast_M(\cU_{s_0}, \cU_{s_1}) \cong \Ext^\ast_S(\cO_{s_0}, \cO_{s_1}) \otimes H^\ast(\PP^{g-1}, \CC). $$ 
	
	From here, a similar argument as in Theorem \ref{thm:comp2} shows that the morphism \eqref{eqn:Pure-class} embeds $S$ as a smooth component of $\cM$.
\end{proof}

\newcommand{\etalchar}[1]{$^{#1}$}


\begin{thebibliography}{BBPN97}
	
	\bibitem[Add16]{add16}
	Nicolas Addington.
	\newblock New derived symmetries of some hyperk\"{a}hler varieties.
	\newblock {\em Algebr. Geom.}, 3(2):223--260, 2016.
	
	\bibitem[ADM16]{add16-2}
	Nicolas Addington, Will Donovan, and Ciaran Meachan.
	\newblock Moduli spaces of torsion sheaves on {K}3 surfaces and derived
	equivalences.
	\newblock {\em J. Lond. Math. Soc. (2)}, 93(3):846--865, 2016.
	
	\bibitem[AIK77]{AIK77}
	Allen~B. Altman, Anthony Iarrobino, and Steven~L. Kleiman.
	\newblock Irreducibility of the compactified {J}acobian.
	\newblock In {\em Real and complex singularities ({P}roc. {N}inth {N}ordic
		{S}ummer {S}chool/{NAVF} {S}ympos. {M}ath., {O}slo, 1976)}, pages 1--12.
	Sijthoff and Noordhoff, Alphen aan den Rijn, 1977.
	
	\bibitem[Ari13]{Arinkin13}
	Dima Arinkin.
	\newblock Autoduality of compactified {J}acobians for curves with plane
	singularities.
	\newblock {\em J. Algebraic Geom.}, 22(2):363--388, 2013.
	
	\bibitem[BBPN97]{newstead}
	Vikraman Balaji, Leticia Brambila-Paz, and Peter~E. Newstead.
	\newblock Stability of the {P}oincar\'{e} bundle.
	\newblock {\em Math. Nachr.}, 188:5--15, 1997.
	
	\bibitem[BK06]{burb06}
	Igor Burban and Bernd Kreu{\ss}ler.
	\newblock On a relative {F}ourier-{M}ukai transform on genus one fibrations.
	\newblock {\em Manuscripta Math.}, 120(3):283--306, 2006.
	
	\bibitem[Eis95]{Eisen95}
	David Eisenbud.
	\newblock {\em Commutative algebra: With a view toward algebraic geometry},
	volume 150 of {\em Graduate Texts in Mathematics}.
	\newblock Springer-Verlag, New York, 1995.
	
	\bibitem[FGI{\etalchar{+}}05]{FGA05}
	Barbara Fantechi, Lothar G\"ottsche, Luc Illusie, Steven~L. Kleiman, Nitin
	Nitsure, and Angelo Vistoli.
	\newblock {\em Fundamental algebraic geometry: Grothendieck's FGA explained},
	volume 123 of {\em Mathematical Surveys and Monographs}.
	\newblock American Mathematical Society, Providence, RI, 2005.
	
	\bibitem[Gro67]{EGA4}
	Alexander Grothendieck.
	\newblock {\'E}l\'ements de g\'eom\'etrie alg\'ebrique. {IV}. {{\'E}}tude
	locale des sch\'emas et des morphismes de sch\'emas {IV}.
	\newblock {\em Inst. Hautes \'Etudes Sci. Publ. Math.}, 32:361, 1967.
	
	\bibitem[GW10]{Wed10}
	Ulrich G\"ortz and Torsten Wedhorn.
	\newblock {\em Algebraic geometry {I}: Schemes with examples and exercises}.
	\newblock Advanced Lectures in Mathematics. Vieweg + Teubner, Wiesbaden, 2010.
	
	\bibitem[HK71]{herz71}
	J\"{u}rgen Herzog and Ernst Kunz, editors.
	\newblock {\em Der kanonische {M}odul eines {C}ohen-{M}acaulay-{R}ings}.
	\newblock Lecture Notes in Mathematics, Vol. 238. Springer-Verlag, Berlin-New
	York, 1971.
	\newblock Seminar \"{u}ber die lokale Kohomologietheorie von Grothendieck,
	Universit\"{a}t Regensburg, Wintersemester 1970/1971.
	
	\bibitem[Huy06]{Huy06}
	Daniel Huybrechts.
	\newblock {\em Fourier-{M}ukai transforms in algebraic geometry}.
	\newblock Oxford Mathematical Monographs. The Clarendon Press, Oxford
	University Press, Oxford, 2006.
	
	\bibitem[Iar72]{iar}
	Anthony Iarrobino.
	\newblock Punctual {H}ilbert schemes.
	\newblock {\em Bull. Amer. Math. Soc.}, 78:819--823, 1972.
	
	\bibitem[KPS18]{KPS18}
	Alexander~G. Kuznetsov, Yuri~G. Prokhorov, and Constantin~A. Shramov.
	\newblock Hilbert schemes of lines and conics and automorphism groups of {F}ano
	threefolds.
	\newblock {\em Jpn. J. Math.}, 13(1):109--185, 2018.
	
	\bibitem[KR18]{krug}
	Andreas Krug and J{\o}rgen~Vold Rennemo.
	\newblock Some ways to reconstruct a sheaf from its tautological image on a
	{H}ilbert scheme of points, 2018.
	\newblock arXiv:1808.05931.
	
	\bibitem[LN05]{lange}
	Herbert Lange and Peter~E. Newstead.
	\newblock On {P}oincar\'{e} bundles of vector bundles on curves.
	\newblock {\em Manuscripta Math.}, 117(2):173--181, 2005.
	
	\bibitem[Mat80]{Mat80}
	Hideyuki Matsumura.
	\newblock {\em Commutative algebra}, volume~56 of {\em Mathematics Lecture Note
		Series}.
	\newblock Benjamin/Cummings Publishing Co., Inc., Reading, Mass., second
	edition, 1980.
	
	\bibitem[Mat86]{Mat86}
	Hideyuki Matsumura.
	\newblock {\em Commutative ring theory}, volume~8 of {\em Cambridge Studies in
		Advanced Mathematics}.
	\newblock Cambridge University Press, Cambridge, 1986.
	\newblock Translated from the Japanese by M. Reid.
	
	\bibitem[Mea15]{meac}
	Ciaran Meachan.
	\newblock Derived autoequivalences of generalised {K}ummer varieties.
	\newblock {\em Math. Res. Lett.}, 22(4):1193--1221, 2015.
	
	\bibitem[MN68]{mum}
	David Mumford and Peter Newstead.
	\newblock Periods of a moduli space of bundles on curves.
	\newblock {\em Amer. J. Math.}, 90:1200--1208, 1968.
	
	\bibitem[Muk81]{Muk81}
	Shigeru Mukai.
	\newblock Duality between {$D(X)$} and {$D(\hat X)$} with its application to
	{P}icard sheaves.
	\newblock {\em Nagoya Math. J.}, 81:153--175, 1981.
	
	\bibitem[Muk99]{Muk99}
	Shigeru Mukai.
	\newblock Duality of polarized {$K3$} surfaces.
	\newblock In {\em New trends in algebraic geometry ({W}arwick, 1996)}, volume
	264 of {\em London Math. Soc. Lecture Note Ser.}, pages 311--326. Cambridge
	Univ. Press, Cambridge, 1999.
	
	\bibitem[NR75]{nara}
	Mudumbai~S. Narasimhan and Sundararaman Ramanan.
	\newblock Deformations of the moduli space of vector bundles over an algebraic
	curve.
	\newblock {\em Ann. Math. (2)}, 101:391--417, 1975.
	
	\bibitem[Sch10]{Sch10}
	Ulrich Schlickewei.
	\newblock Stability of tautological vector bundles on {H}ilbert squares of
	surfaces.
	\newblock {\em Rend. Semin. Mat. Univ. Padova}, 124:127--138, 2010.
	
	\bibitem[Sta16]{Sta16}
	David Stapleton.
	\newblock Geometry and stability of tautological bundles on {H}ilbert schemes
	of points.
	\newblock {\em Algebra Number Theory}, 10(6):1173--1190, 2016.
	
	\bibitem[{Sta}18]{stacks-project}
	The {Stacks Project Authors}.
	\newblock \textit{Stacks Project}.
	\newblock \url{https://stacks.math.columbia.edu}, 2018.
	
	\bibitem[Tju70]{tju}
	Andre{\u{\i}}~N. Tjurin.
	\newblock Analogues of {T}orelli's theorem for multidimensional vector bundles
	over an arbitrary algebraic curve.
	\newblock {\em Izv. Akad. Nauk SSSR Ser. Mat.}, 34:338--365, 1970.
	
	\bibitem[Wan14]{Wan14}
	Malte Wandel.
	\newblock Stability of tautological bundles on the {H}ilbert scheme of two
	points on a surface.
	\newblock {\em Nagoya Math. J.}, 214:79--94, 2014.
	
	\bibitem[Wan16]{Wan16}
	Malte Wandel.
	\newblock Tautological sheaves: stability, moduli spaces and restrictions to
	generalised {K}ummer varieties.
	\newblock {\em Osaka J. Math.}, 53(4):889--910, 2016.
	
\end{thebibliography}
\end{document}